\newtheorem{theorem}{Theorem}
\newtheorem{lemma}{Lemma}
\newcommand{\be}{\begin{equation}}
\newcommand{\ee}{\end{equation}}
\renewcommand{\mid}{\,:\,}
\begin{document}
\title{Convergent relaxations of \\polynomial optimization problems\\ with
non-commuting variables}
\author{S. Pironio$^1$
\and M. Navascu\'es$^2$
\and A. Ac\'{\i}n$^3$
\and
\parbox{\linewidth}{\center\normalsize
$^1$Group of Applied Physics, University of Geneva\\
$^2$Institute for Mathematical Sciences, Imperial College London\\
$^3$ICFO-Institut de Ci\`{e}ncies Fot\`{o}niques}}

\date{}
\maketitle
\begin{abstract}
We consider optimization problems with polynomial inequality
constraints in non-commuting variables. These non-commuting
variables are viewed as bounded operators on a Hilbert space whose
dimension is not fixed and the associated polynomial inequalities as semidefinite positivity constraints. Such problems arise naturally in quantum theory and quantum information science. To solve them, we introduce a hierarchy of semidefinite programming relaxations which generates a monotone sequence of
lower bounds that converges to the optimal solution. We also introduce a criterion to
detect whether the global optimum is reached at a given relaxation step and show how to extract a global optimizer from the solution of the corresponding semidefinite programming problem.
\end{abstract}

\vfill
\pagebreak

\section{Introduction}
A standard problem in optimization theory is to find the global
minimum of a polynomial on a set constrained by polynomial
inequalities, that is, to solve the program
\begin{equation}\label{polyprog}
\begin{array}{llclll}
p^\star &=&\displaystyle\min_{x\in\mathbb{R}^n}&p(x)\\
   & &\text{s.t.}& q_i(x)\geq 0&\qquad i=1,\ldots,m\,,
\end{array}
\end{equation}
where $p(x)$ and $q_i(x)$ are real-valued polynomials in the
variable $x\in\mathbb{R}^n$. To deal with such non-convex problems,
Lasserre \cite{lasserre} introduced a sequence of semidefinite programming (SDP)\footnote{See Appendix~A for a brief introduction to semidefinite programming.} relaxations of increasing size, whose optima
converge monotonically to the global optimum $p^\star$; a similar
approach has been proposed by Parrilo \cite{Parrilo}.
This paper presents a generalization of Lasserre's method for a
non-commutative version of the optimization problem
(\ref{polyprog}). That is, we consider a polynomial optimization
problem where the variables $x=(x_1,\ldots,x_n)$ are not simply
real numbers, but non-commuting (NC) variables for which, in
general, $x_ix_j\neq x_j x_i$. Our motivation comes from quantum
theory, whose basic objects are matrices and operators that do not
commute. But our approach might also find applications in other
fields that involve optimization over matrices or operators, such as
in systems engineering \cite{helton-putinar}.

To write down the non-commutative version of $(\ref{polyprog})$, let $p(x)$ and $q_i(x)$ be polynomial expressions in the non-commuting variables $x=(x_1,\ldots,x_n)$. Given an Hilbert space $H$ and a set $X=(X_1,\ldots,X_n)$ of bounded operators on
$H$, we define operators $p(X)$ and $q_i(X)$ by substituting the variables $x$ by the operators $X$ in the expressions $p(x)$ and $q_i(x)$. Given in addition a normalized vector $\phi$ in $H$, we evaluate the polynomial $p(X)$  as $\langle \phi, p(X) \phi\rangle$.
The non-commutative version of the optimization problem
(\ref{polyprog}) considered here is then
\begin{equation}\label{polyncprog}
\begin{array}{llcll}
p^\star&=&\displaystyle\min_{(H,X,\phi)}& \multicolumn{1}{l}{\langle \phi, p(X) \phi\rangle}\\
&&\text{s.t.}&q_i(X)\succeq 0&\quad i=1,\ldots,m\,, \\
\end{array}
\end{equation}
where $q_i(X)\succeq 0$ means that the operator $q_i(X)$ should be positive semidefinite. In other words, given the input data $p(x)$ and $q_i(x)$,
 we look for the combination $(H,X,\phi)$ of Hilbert space $H$, operators $X$, and normalized state $\phi$ (both defined on $H$) that minimizes $\langle \phi, p(X) \phi\rangle$ subject to the constraints $q_i(X)\succeq 0$. It is important to note that the dimension of the Hilbert space $H$ is not fixed, but subject to optimization as well.

Taking inspiration from Lasserre's method \cite{lasserre} and from
the papers \cite{mpa,mpa2}, we introduce a hierarchy of SDP
relaxations for the above optimization problem. The optimal
solutions of these relaxations form a monotonically increasing
sequence of lower bounds on the global minimum $p^\star$. We prove
that this sequence converges to the optimum $p^\star$ when the set
of constraints $q_i(X)\succeq 0$ is such that every tuple of
operators $X=(X_1,\ldots,X_n)$ satisfying them are bounded, i.e.,
such that they satisfy $C^2-(X_1+\cdots+X_n)\succeq 0$ for some real constant $C>0$. Our proof is
constructive: from the sequence of optimal solutions of the SDP
relaxations, we build an explicit global minimizer
$(H^\star,X^\star,\phi^\star)$ for~(\ref{polyncprog}), where
$H^\star$ is, in general, infinite-dimensional. In some cases, the
SDP relaxation at a given finite step in the hierarchy may already
yield the global minimum $p^\star$. We introduce a criterion to
detect such events, and show in this case how to extract the
global minimizer $(H^\star,X^\star,\phi^\star)$ from the solution of
this particular SDP relaxation. The resulting Hilbert space
$H^\star$ is then finite-dimensional, with its dimension determined by
the rank of the matrices involved in the solution of the SDP
relaxation.

Our method can find direct applications in quantum information science, e.g. to compute upper-bounds on the maximal violation of Bell inequalities, and in quantum chemistry to compute atomic and molecular ground state energies. Practice reveals that convergence is usually fast and finite (up-to
machine precision).

\subsection{Relation to other works}
Unconstrained NC polynomial optimization problems (i.e. the minimization of a single polynomial $p(X)$ with no constraints of the form $q_i(X)\succeq 0$) were considered in~\cite{klep}. Such problems can also be solved using SDP, as implemented in the MATLAB toolbox \verb"NCSOStools"~\cite{cafuta}. Unlike constrained NC optimization~(\ref{polyncprog}), which requires a sequence of SDPs to compute the minimum, for unconstrained NC optimization a \emph{single} SDP is sufficient by a theorem of Helton that a symmetric NC polynomial is positive if and only if it admits a sum of square decomposition~\cite{helton}. This single SDP corresponds actually to the first step of our hierarchy when neglecting constraints coming from the conditions $q_i(X)\succeq 0$.

In \cite{mpa}, a subclass of the general constrained NC problem~(\ref{polyncprog}) which is of interest in quantum information (see (\ref{bell}) later in the text) was considered and a sequence of SDP relaxations introduced for it. The convergence of this SDP sequence was established in \cite{mpa2,doherty,ito}. Our work can be seen as a generalization of these results to arbitrary NC polynomial optimization.

In the commutative case, the convergence of the relaxations introduced by Lasserre is based on a sum of squares representation theorem of Putinar \cite{putinar} for positive polynomials. The
connection to Putinar's representation arises when considering the
dual problems of the SDP relaxations. A non-commutative analogue
of Putinar's result, the Positivstellensatz for non-commutative
positive polynomials, has been introduced by Helton and McCullough
\cite{heltonmc}. Although we first prove the convergence of the
hierarchy introduced here through the primal version of our SDP
relaxations (in the spirit of \cite{mpa2}) we also provide an alternative proof through the
duals, which exploits Helton and McCullough's result (as used in \cite{doherty} and \cite{ito}).

Note that the problem (\ref{polyncprog}) can also account for equality constraints $q_i(X)=0$, which can be enforced through the inequalities $q_i(X)\succeq 0$ and $-q_i(X)\succeq 0$. When constraints of the form $x_ix_j-x_jx_i$ are explicitly added to (\ref{polyncprog}), that is, when we require that the variables $x$ commute, our method reduces to the one introduced by Lasserre. It is interesting to note that the results presented here, such as the convergence of the hierarchy or the criterion to detect optimality, are easier to establish in the general non-commutative framework than they are in the specialized commutative case. This commutative setting has generated quite a large literature and the properties of the corresponding SDP relaxations have been thoroughly investigated. We refer to \cite{laurent} for a review. Our work provides a NC analogue only of the most basic results in the commutative case. It would be interesting to reexamine from a NC perspective other topics in this subject.

\subsection{Organization of the paper}
In Section~2, we define some notation and introduce in
more detail the class of problems that we consider here. Section~3 contains our main results: we introduce our hierarchy of SDP relaxations,
prove its convergence, show how to detect optimality at a
finite step in the hierarchy and how to extract a global optimizer. We then explain the relation between our approach and the
works of Helton and McCullough. We proceed by mentioning briefly how to modify our method to deal
efficiently with equality constraints. In particular, we discuss how it can be simplified when dealing with hermitian variables and how it reduces to Lasserre's method in the case of commuting variables. We end Section~3 by showing how our method can be extended to solve a slightly more general class of optimization problems. In Section~4, we illustrate our method on concrete examples. Finally, we briefly discuss practical applications of our method in the quantum setting in Section~5.

\section{Notation and definitions}
Let $\mathbb{K}\in\{\mathbb{R},\mathbb{C}\}$ denote the field of real or complex
numbers. We consider the algebra $\mathbb{K}[x,x^*]$ of polynomials in the $2n$ noncommuting variables $x=(x_1,\ldots,x_n)$ and $x^*=(x^*_1,\ldots,x^*_n)$ with coefficients from $\mathbb{K}$. That is, an element $p\in \mathbb{K}[x,x^*]$ is a linear combination
\begin{equation}
p=\sum_w p_w\, w
\end{equation}
of words $w$ in the $2n$ letters $x$ and $x^*$, where the sum is finite and $p_w\in \mathbb{K}$. We interpret $\ast$ as an involution (that
is, loosely speaking, a conjugate transpose) defined as follows: on letters, $(x_i)^*=x_i^*$ and $(x_i^*)^*=x_i$; on a word $ w = w _1\ldots  w _n$, $ w ^*= w _n^*\ldots  w _1^*$; and on a polynomial, $p^*=\sum_ w  p^*_ w   w ^*$, where $p^*_ w $ is the complex conjugate of $p_ w $. Thus $\mathbb{K}[x,x^*]$ is the free $\ast$-algebra generated by the $2n$ variables $x_1,\ldots,x_n,x_1^*,\ldots,x_n^*$.
In the following, we will often view these $2n$ variables as $x_1,\ldots,x_n,\linebreak[1]x_{n+1},\linebreak[1]\ldots,x_{2n}$ by identifying $x_{n+i}$ with $x_i^*$.

Throughout this paper, the symbols $u,v,w$ always denote words and we denote the empty word by $1$. We use the notation $\mathcal{W}_d$ for the set of all words of length $|w|$ at most $d$, and $\mathcal{W}_\infty$ for the set of all words (of unrestricted length). The number of words in  $\mathcal{W}_d$ is $|\mathcal{W}_d|=\left((2n)^{d+1}-1\right)/(2n-1)$.
The degree of a polynomial $p$ is the length of the longest word in $p$ and is denoted $\text{deg}(p)$. We let $\mathbb{K}[x,x^*]_d$ denote the set of polynomials $p=\sum_{| w |\leq d}  p_ w \,  w $ of degree $\leq d$. If necessary, a polynomial of degree
$d$ can be viewed as a polynomial of higher degree $d'$ by setting to zero the coefficients associated with words of length larger than $d$.
A polynomial $p$ is said to be hermitian if $p^*=p$, or in term of its coefficients, if $p^*_ w =p_{ w ^*}$.
Note that words can be interpreted as monomials and we will sometimes use the two terms interchangeably. We will then also refer to the length $|w|$ of a word as the degree of the monomial $w$ and to $\mathcal{W}_d$ as a monomial basis for $\mathbb{K}[x,x^*]_d$.

Let $\mathcal{B}(H)$ denote the set of bounded operators on a Hilbert space $H$ defined on the field $\mathbb{K}$.
Consider a set of operators $X=(X_1,\ldots,X_n)$ from $\mathcal{B}(H)$. Given the polynomial $p\in \mathbb{K}[x,x^*]$, we define the operator
$p(X)\in \mathcal{B}(H)$ by substituting every variable $x_i$ by the operator $X_i$ and every variable $x_i^*$ by $X_i^*$, where $^*$ denotes the adjoint operation on $H$.  If $p^*=p$ is a
hermitian polynomial, then $p(X)=p^*(X)$ is a hermitian
operator and the quantity $\langle \phi, p(X)\phi \rangle$ is
real for every vector $\phi$ in $H$. A hermitian operator $O$
is said to be positive semidefinite, a fact that we denote by
$O\succeq 0$, if $\langle \phi, O \phi \rangle \geq 0$
for all $\phi\in H$.

\subsection{Formulation of the optimization problem}
Let $p$ and $q_i$ $(i=1,\ldots,m)$ be hermitian polynomials in
$\mathbb{K}[x,x^*]$. We are interested in the following
optimization problem:
\begin{equation}\label{polyncprog2}
\mathbf{P}:\qquad
\begin{array}{llcll}
\quad p^\star&=&\displaystyle\min_{(H,X,\phi)}& \multicolumn{1}{l}{\langle \phi, p(X) \phi\rangle}\\
&&\text{s.t.}& q_i(X)\succeq
0&\quad i=1,\ldots,m\,,\\
\end{array}
\end{equation}
where the optimization should be understood over all Hilbert spaces
$H$ (of arbitrary dimension), all sets of operators
$X=(X_1,\ldots,X_n)$ in $\mathcal{B}(H)$, and all normalized vectors
$\phi$ in $H$.  We assume throughout the remaining of the paper that
this problem admits a feasible solution, that is, that there
exists a triple $(H,X,\phi)$ such that $\langle\phi,\phi\rangle=1$
and $q_i(X)\succeq 0$ for $i=1,\ldots,m$.

Let $Q=\{q_i\mid i=1,\ldots,m\}$ be the set of polynomials determining the positivity constraints in (\ref{polyncprog2}). The following definitions follow those used in~\cite{klep2}. The \emph{positivity domain} $\mathbf{S}_Q$ associated to $Q$ is the class of tuples $X=(X_1,\ldots,X_n)$ of bounded operators on a Hilbert space making each $q_i(X)$ a positive semidefinite operator.
The \emph{quadratic module} $\mathbf{M}_Q$ is the set of all elements of the form $\sum_if_i^*f_i+\sum_i\sum_j g_{ij}^*q_ig_{ij}$ where $f_i$ and $g_{ij}$ are polynomials in $\mathbb{K}[x,x^*]$.
We say that $\mathbf{M}_Q$ is Archimedean if there exists a real constant $C$ such that $C^2-(x_1^*x_1+\cdots+x_{2n}^*x_{2n})\in \mathbf{M}_Q$. In this case, the positivity domain $\mathbf{S}_Q$ is bounded: for all $X \in \mathbf{S}_Q$, $C^2-(X_1^*X_1+\cdots+X^*_{2n}X_{2n})\succeq 0$. Note that if $\mathbf{S}_Q$ is bounded, we can always add $C^2-(x_1^*x_1+\cdots+x_{2n}^*x_{2n})$ to $Q$ for a sufficiently large $C$ to make $\mathbf{M}_Q$ Archimedean without changing $\mathbf{S}_Q$. In the following we will always assume that the constraints in $Q$ are such that $\mathbf{M}_Q$ is Archimedean.

\section{Main results}\label{relaxsec}
\subsection{Moment and localizing matrices}
Let $y=(y_w )_{|w|\leq d}\in \mathbb{K}^{|\mathcal{W}_d|}$ be a sequence of real or complex numbers indexed in $\mathcal{W}_d$, i.e., to each word $w\in \mathcal{W}_d$ corresponds a number $y_w\in \mathbb{K}$. We define the linear mapping $L_y:\mathbb{K}[x,x^*]_d\mapsto \mathbb{K}$ as
\begin{equation}
\label{map} p\mapsto L_y(p)=\sum_{|w|\leq d} p_w  y_w \,.
\end{equation}
By analogy with \cite{lasserre}, given a sequence $y=(y_ w )_{| w |\leq 2k}$ indexed in $W_{2k}$, we define the \emph{moment matrix}
$M_k(y)$ of order $k$ as a matrix with rows and columns indexed in $W_k$ and whose entry $(v,w)$ is given by
\begin{equation}\label{mom}
M_k(y)(v,w)=L_y( v ^* w )=y_{ v ^* w }.
\end{equation}
If $q=\sum_{|u|\leq d}q_u u$ is a polynomial of degree $d$ and
$y=(y_ w )_{| w |\leq 2k+d}$ a sequence indexed in $\mathcal{W}_{2k+d}$, we define the \emph{localizing matrix} $M_k(q y)$ as the matrix with
rows and columns indexed in $\mathcal{W}_k$, and whose entry $(v ,w )$ is
\begin{equation}\label{loc}
M_k(qy)( v , w )=L_y( v^*  q  w )=\sum_{| u |\leq d} q_ u
y_{ v ^* u  w }\,.
\end{equation}
We say that a sequence $y=(y_ w )_{| w |\leq 2k}$ admits a \emph{moment representation}, if there exists a triple $(H,X,\phi)$ with a normalized $\phi$ such that
\begin{equation}\label{momrepr}
y_ w =\langle \phi, w (X)\phi\rangle\,,
\end{equation}
for all $| w |\leq 2k$.
\begin{lemma}\label{posmom} Let $y=(y_ w )_{| w |\leq 2k}$ be a sequence admitting a moment representation. Then $y_1=1$ and $M_k(y)\succeq 0$. If the moment representation (\ref{momrepr}) is such that $q(X)\succeq 0$ for some $q\in\mathbb{K}[x,x^*]$, then  in addition $M_{k-d}(qy)\succeq 0$, where $d=\lceil \text{deg}(q)/2\rceil$.
\end{lemma}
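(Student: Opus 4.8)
The plan is to realize each of the two matrices as the Gram matrix of an explicit finite family of vectors in $H$, so that positive semidefiniteness becomes the trivial statement that Gram matrices are positive semidefinite. The normalization $y_1=1$ is immediate: $1$ denotes the empty word, $1(X)$ is the identity operator on $H$, and $\phi$ is a unit vector, so $y_1=\langle\phi,\phi\rangle=1$.

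For the moment matrix, I would fix an arbitrary coefficient vector $z=(z_w)_{|w|\le k}\in\mathbb{K}^{|\mathcal{W}_k|}$ and introduce the vector $\psi:=\sum_{|w|\le k} z_w\, w(X)\phi\in H$. The one algebraic identity needed is that substitution commutes with the $*$-involution: for any words $v,w$ one has $(v^*w)(X)=\bigl(v(X)\bigr)^*\,w(X)$, which follows letter by letter from the definition of $*$ on letters and words together with $(A_1\cdots A_r)^*=A_r^*\cdots A_1^*$ for operators. Hence $M_k(y)(v,w)=y_{v^*w}=\langle\phi,(v^*w)(X)\phi\rangle=\langle v(X)\phi,\,w(X)\phi\rangle$, so that
\[
\sum_{v,w}\overline{z_v}\,M_k(y)(v,w)\,z_w=\Big\langle\sum_{|v|\le k} z_v\, v(X)\phi,\ \sum_{|w|\le k} z_w\, w(X)\phi\Big\rangle=\langle\psi,\psi\rangle\ge 0.
\]
The same identity with $v$ and $w$ interchanged shows that $M_k(y)$ is Hermitian (symmetric when $\mathbb{K}=\mathbb{R}$), so the left-hand side is real; and since $z$ was arbitrary, $M_k(y)\succeq 0$.

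For the localizing matrix I would run the same computation with the extra factor $q(X)$ inserted in the middle. Write $q=\sum_{|u|\le\deg q} q_u\, u$, put $d=\lceil\deg(q)/2\rceil$, fix $z=(z_w)_{|w|\le k-d}$, and set $\psi:=\sum_{|w|\le k-d} z_w\, w(X)\phi$. First one checks all entries are well defined: for $|v|,|w|\le k-d$ and $|u|\le\deg q\le 2d$ the word $v^*uw$ has length at most $2k$, so $y_{v^*uw}$ occurs in the given sequence. Using the three-factor version $(v^*uw)(X)=\bigl(v(X)\bigr)^*u(X)w(X)$ together with $\sum_u q_u\, u(X)=q(X)$, one gets
\[
\sum_{v,w}\overline{z_v}\,M_{k-d}(qy)(v,w)\,z_w=\sum_{v,w,u}\overline{z_v}\,q_u\,z_w\,\big\langle v(X)\phi,\ u(X)w(X)\phi\big\rangle=\langle\psi,\,q(X)\psi\rangle\ge 0,
\]
the last inequality being exactly the hypothesis $q(X)\succeq 0$. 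Since $z$ was arbitrary, $M_{k-d}(qy)\succeq 0$.

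The argument is entirely elementary and I do not expect a genuine obstacle. The only points requiring a little care are the compatibility of the involution with operator substitution, $(v^*w)(X)=\bigl(v(X)\bigr)^*w(X)$ and its three-factor analogue, and the index bookkeeping ensuring that every word arising as a subscript of $y$ has length at most $2k$, so that all matrix entries invoked are actually defined.
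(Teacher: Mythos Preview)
Your proof is correct and follows essentially the same route as the paper's: both compute $z^*M_k(y)z$ (respectively $z^*M_{k-d}(qy)z$) by unfolding the moment representation and recognizing the result as $\langle z(X)\phi,\,z(X)\phi\rangle$ (respectively $\langle z(X)\phi,\,q(X)z(X)\phi\rangle$), which is nonnegative by the inner product axioms (respectively by $q(X)\succeq 0$). Your version is slightly more explicit about the identity $(v^*w)(X)=\bigl(v(X)\bigr)^*w(X)$ and about the index bookkeeping ensuring all words have length at most $2k$, but these are details the paper simply takes for granted.
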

\begin{proof}
Eq. (\ref{momrepr}) immediately implies $y_1=1$ since $\langle\phi,\phi\rangle=1$. The positivity of the moment matrix $M_k(y)$ follows from the fact that for any vector $z\in
\mathbb{K}^{|\mathcal{W}_k|}$
\begin{eqnarray}
z^* M_k(y)z&=&\sum_{ v,w} z^*_v
M_k(y)( v,w)z_ w =\sum_{v,w}  z^*_ v
y_{ v ^* w }
z_ w \nonumber\\
&=&\langle \phi,\sum_ v  z^*_ v   v ^*(X) \sum_ w
z_ w   w(X)\, \phi\rangle=\langle\phi, z^*(X)
z(X)\,\phi\rangle\geq 0\,,
\end{eqnarray}
where we have defined the operator $z(X)=\sum_ w  z_ w   w (X)$.

Suppose now that $y$ admits a moment representation (\ref{momrepr}) by a triple
$(H,X,\phi)$ such that $q(X)\succeq 0$. Then $M_{k-d}(qy)\succeq 0$ since for all vectors $z\in \mathbb{K}^{|\mathcal{W}_{k-d}|}$,
\begin{eqnarray}
z^* M_{k-d}(qy)z&=&\sum_{ v , w } z^*_ v
M_{k-d}(qy)( v , w ) z_ w =\sum_{ v , w , u }z^*_ v  q_ u  y_{ v ^* u w }
z_ w \nonumber\\
&=&\langle\phi,\sum_ v  z^*_ v   v ^*(X)
\sum_ u  q_ u   u (X) \sum_ w  z_ w
 w (X)\,\phi\rangle\nonumber\\
 &=&\langle\phi, z^*(X) q(X) z(X)\,\phi\rangle\geq
0\,,
\end{eqnarray}
where the last inequality follows from the fact that $q(X)\succeq
0$.
\end{proof}

\subsection{Convergent SDP relaxations}\label{csr}
For $2k\geq \max \left\{\mathrm{deg}(p), \max_i\mathrm{deg}(q_i)\right\}$,
consider the SDP problem
\begin{equation}\label{relax}
\mathbf{R_k}:\qquad
\begin{array}{ccllr}
\quad p^{k}&=&\displaystyle\min_{y} &\sum_ w  p_ w  y_ w \\
&&\mathrm{s.t.} & y_1=1\\
&& & M_k(y)\succeq 0\\
&& & M_{k-d_i}(q_iy)\succeq 0& \quad i=1,\ldots,m,\\
\end{array}
\end{equation}
where $d_i=\lceil\text{deg}(q_i)/2\rceil$ and the optimization is
over $y=(y_ w )_{| w |\leq 2k}\in \mathbb{K}^{|\mathcal{W}_{2k}|}$. The optimum $p^{k}$ provides a lower-bound on the global optimum $p^\star$ of the original problem $\mathbf{P}$, since any feasible solution $(H,X,\phi)$ of $\mathbf{P}$ yields a feasible solution $y$ of $\mathbf{R_k}$ through Eq.~(\ref{momrepr}) and Lemma~\ref{posmom}. We refer to $\mathbf{R_k}$ as the SDP relaxation of order $k$ of $\mathbf{P}$.
Since the positivity of the moment and localizing matrices of a given order $k'$ implies the positivity of the moment and localizing matrices of lower orders $k$, the sequences of SDP relaxations form a hierarchy in the sense that $p^k\leq p^{k'}$ when $k\leq k'$.

\begin{theorem}
If $\mathbf{M}_Q$ is Archimedean, $\lim_{k\rightarrow\infty} p^{k}=p^\star$.
\end{theorem}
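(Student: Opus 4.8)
The plan is to follow the primal route (in the spirit of \cite{mpa2}): construct a feasible solution of $\mathbf{P}$ out of the sequence of optimal solutions of the relaxations $\mathbf{R_k}$, and show its objective value equals $\lim_k p^k$. First I would fix, for each $k$, an optimal solution $y^{(k)}=(y^{(k)}_w)_{|w|\le 2k}$ of $\mathbf{R_k}$, so that $L_{y^{(k)}}(p)=p^k$. The key preliminary observation is a uniform bound: since $\mathbf{M}_Q$ is Archimedean, there is a polynomial identity $C^2-\sum_i x_i^*x_i=\sum_j f_j^*f_j+\sum_{i,j}g_{ij}^*q_ig_{ij}$, and evaluating the corresponding linear functional against the positivity of the moment and localizing matrices of $y^{(k)}$ shows that $L_{y^{(k)}}(\sum_i x_i^*x_i)\le C^2$. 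Iterating this estimate (bounding $L_{y^{(k)}}(w^*w)$ for longer words $w$ by repeatedly inserting the Archimedean relation, exactly as in Lasserre's commutative proof) yields, for every fixed word $w$, a bound $|y^{(k)}_w|\le C^{|w|}$ uniform in $k$. This is the step I expect to be the main obstacle: making the word-length induction clean in the non-commutative setting, i.e. controlling $|y^{(k)}_{u^*wv}|$ in terms of shorter words via Cauchy--Schwarz on $M_k(y^{(k)})$ together with the Archimedean bound, so that one really gets uniform control over \emph{all} entries, not just diagonal ones.

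Given the uniform bounds, I would extract (by a diagonal argument over $k$) a subsequence along which $y^{(k)}_w\to y^\infty_w$ for every word $w\in\mathcal{W}_\infty$. The limit sequence $y^\infty$ satisfies $y^\infty_1=1$, $M_k(y^\infty)\succeq 0$ and $M_{k-d_i}(q_i y^\infty)\succeq 0$ for \emph{all} $k$ (each such constraint involves only finitely many entries, and positivity is preserved under entrywise limits), and $L_{y^\infty}(p)=\lim_k p^k =: \bar p$. So $y^\infty$ is an ``infinite-order'' feasible solution.

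The final step is a GNS-type reconstruction: define a sesquilinear form on $\mathbb{K}[x,x^*]$ by $\langle f,g\rangle := L_{y^\infty}(f^*g)$, which is positive semidefinite because $M_k(y^\infty)\succeq 0$ for all $k$. Quotienting by the null space $N=\{f : L_{y^\infty}(f^*f)=0\}$ and completing gives a Hilbert space $H^\star$; left multiplication $X_i:[f]\mapsto[x_if]$ is well defined (the null space is a left ideal, using Cauchy--Schwarz) and \emph{bounded}, precisely because the Archimedean bound gives $\|X_i[f]\|^2 = L_{y^\infty}(f^*x_i^*x_if)\le C^2\,L_{y^\infty}(f^*f)$ — this is where Archimedeanity is essential, not just for the uniform bounds. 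Setting $\phi^\star=[1]$, one checks $\langle\phi^\star,\phi^\star\rangle=y^\infty_1=1$, $\langle\phi^\star,w(X^\star)\phi^\star\rangle=y^\infty_w$ for all words, hence $\langle\phi^\star,p(X^\star)\phi^\star\rangle=\bar p$, and $\langle g,q_i(X^\star)g\rangle=L_{y^\infty}(g^*q_ig)\ge0$ follows from $M_{k}(q_iy^\infty)\succeq0$, so $q_i(X^\star)\succeq0$ on the dense subspace and hence on $H^\star$. Thus $(H^\star,X^\star,\phi^\star)$ is feasible for $\mathbf{P}$ with value $\bar p$, giving $p^\star\le\bar p=\lim_k p^k$; combined with the already-established $p^k\le p^\star$ for all $k$, we get $\lim_k p^k=p^\star$. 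I would also note the alternative dual-side argument via the non-commutative Positivstellensatz of Helton and McCullough, but the primal construction above additionally delivers the explicit (generally infinite-dimensional) minimizer promised in the introduction.
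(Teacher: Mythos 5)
Your proposal is correct and follows essentially the same route as the paper's primal proof: uniform entrywise bounds $|y_w|\le C^{|w|}$ derived from Archimedeanity (the paper's Lemmas~\ref{lempr1} and~\ref{lempr2}, which carry out precisely the diagonal induction plus Cauchy--Schwarz on $2\times 2$ minors of $M_k(y)$ that you anticipate, paying only a degree shift $d_\mathbf{M}$ when $c\notin Q$), extraction of an infinite feasible moment sequence by pointwise limits (Banach--Alaoglu in the paper, your diagonal argument is equivalent), and the same GNS reconstruction of $(\hat H,\hat X,\hat\phi)$ yielding $\hat p\ge p^\star$. The step you flag as the main obstacle is indeed the only real technical content and resolves exactly as you describe, so no gap remains.
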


Remember that if $\mathbf{M}_Q$ is Archimedean, there exists polynomials $f_{i}$ and $g_{ij}$ and a constant $C$ such that $C^2-(x_1^*x_1+\cdots+x_{2n}^*x_{2n})=\sum_if_i^*f_i+\sum_i\sum_j g_{ij}^*q_ig_{ij}$. In the following, we write $d_\mathbf{M}=\max_{ij}\{\text{deg}(f_i),\text{deg}(g_{ij})+d_i\}$. Note that $d_\mathbf{M}\geq 1$, with $d_\mathbf{M}=1$ when $C^2-(x_1^*x_1+\cdots+x_{2n}^*x_{2n})$ is contained in $Q$. Although the asymptotic behavior of the hierarchy of SDP relaxations only depends on the quadratic module being Archimedean, it may be advantageous in practice to add the constraint $C^2-(x_1^*x_1+\cdots+x_{2n}^*x_{2n})$ to $Q$. This will guarantee in particular that the first step of the hierarchy has a bounded solution (see Lemma~\ref{lempr2}).

The proof of Theorem~1 is based on the following four lemmas.

\begin{lemma}\label{lempr1}
Let $c=C^2-(x^*_1x_1+\cdots+x_{2n}^*x_{2n})$ and let $y$ be a sequence satisfying $y_1=1$, $M_k(y)\succeq 0$, and $M_{k-1}(cy)\succeq 0$. Then $|y_w|\leq C^{|w|}$ for all $|w|\leq 2k$.
\end{lemma}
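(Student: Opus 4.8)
The plan is to extract scalar inequalities from the two positivity constraints by passing to small principal submatrices and to single coordinate vectors, and then to bootstrap them by an induction on the length of $w$.

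First I would record what $M_k(y)\succeq 0$ gives. Since ``$\succeq 0$'' includes hermiticity, $M_k(y)$ is a hermitian positive semidefinite matrix, so its diagonal entries $y_{w^*w}=M_k(y)(w,w)$ are real and nonnegative for every $|w|\le k$, and every principal $2\times 2$ submatrix is positive semidefinite, which by Cauchy--Schwarz gives $|y_{v^*w}|^2=|M_k(y)(v,w)|^2\le M_k(y)(v,v)\,M_k(y)(w,w)=y_{v^*v}\,y_{w^*w}$ for all $|v|,|w|\le k$. Taking $v=1$ in particular, $|y_w|^2\le y_{w^*w}$ whenever $|w|\le k$, so it is enough to control the diagonal quantities $y_{w^*w}$.

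Next I would use $M_{k-1}(cy)\succeq 0$, where $c=C^2-\sum_{i=1}^{2n}x_i^*x_i$. For a word $u$ with $|u|\le k-1$, the corresponding diagonal entry is $M_{k-1}(cy)(u,u)=L_y(u^*cu)=C^2 y_{u^*u}-\sum_{i=1}^{2n}y_{(x_iu)^*(x_iu)}\ge 0$. Since each summand $y_{(x_iu)^*(x_iu)}$ is nonnegative by the first step (here $|x_iu|\le k$), this yields the one-letter estimate $y_{(x_iu)^*(x_iu)}\le C^2 y_{u^*u}$ for every $i=1,\ldots,2n$ and every $|u|\le k-1$. Iterating: given a word $w$ with $|w|=\ell\le k$, write $w=x_{i_1}\cdots x_{i_\ell}$ and apply the one-letter estimate $\ell$ times along the shrinking suffixes $x_{i_j}\cdots x_{i_\ell}$ (each of length $\le k-1$ once $j\ge 1$), ending at the empty word to get $y_{w^*w}\le C^{2\ell}y_1=C^{2|w|}$.

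Finally, I would treat words with $k<|w|\le 2k$, which do not lie on the diagonal of $M_k(y)$ and are therefore not directly reached by the recursion; this is the one step that needs some care. For such a $w$, split it as a concatenation $w=vu$ with $|v|,|u|\le k$ (possible since $|w|\le 2k$), observe that $y_w=y_{(v^*)^*u}=M_k(y)(v^*,u)$, and combine the Cauchy--Schwarz estimate of the first step with the diagonal bound just established --- noting $y_{vv^*}=y_{(v^*)^*(v^*)}\le C^{2|v|}$ --- to conclude $|y_w|^2\le y_{vv^*}\,y_{u^*u}\le C^{2|v|}C^{2|u|}=C^{2|w|}$, i.e.\ $|y_w|\le C^{|w|}$; taking $v=1$ when $|w|\le k$ recovers the earlier bound. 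The main obstacle is precisely this last routing through an off-diagonal Cauchy--Schwarz inequality after a suitable splitting of the word, since the recursion by itself only controls words of length at most $k$; everything else is a routine induction on word length.
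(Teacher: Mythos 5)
Your proposal is correct and follows essentially the same route as the paper: first bound the diagonal entries $y_{w^*w}\le C^{2|w|}$ by induction on $|w|$ using the diagonal entries of $M_{k-1}(cy)$ together with nonnegativity of the diagonal of $M_k(y)$, then control all $y_w$ with $|w|\le 2k$ via positivity of $2\times 2$ principal submatrices of $M_k(y)$ (Cauchy--Schwarz) after splitting $w$ into two halves of length at most $k$. The only difference is cosmetic bookkeeping in how the split word is matched to a row/column pair of the moment matrix.
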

\begin{proof}
The diagonal elements of $M_{k-1}(cy)$ are of the form
$C^2y_{ w ^* w }-\sum_{i=1}^{2n}y_{w^*x_i^*x_iw}$ with
$|w|\leq k-1$. Since the localizing matrix $M_{k-1}(cy)$ is positive
semidefinite, these diagonal entries must be positive, that is,
$\sum_{i=1}^{2n}y_{w^*x_i^*x_iw}\leq C^2y_{ w ^* w }$. In addition, it also holds that $y_{w^*x_i^*x_iw}\geq 0$ since these numbers are diagonal entries of the moment matrix $M_k(y)$. It thus follows that $y_{w^*x_i^*x_iw}\leq C^2y_{ w ^* w }$ for all $|w|\leq k-1$ and all $i=1,\dots,2n$. Given that $y_{1}=1$, we deduce by induction that
$y_{ w ^*w }\leq C^{2| w |}$ for all $| w |\leq k$.

The moment matrix $M_k(y)$ admits the following matrix
\be
\left(\begin{array}{cc}y_{ w ^*w }&y_{ w ^* v }\\
y_{ v ^* w }&y_{ v ^* v }\end{array}\right),
\ee
as a submatrix, where $| w |,| v |\leq k$. Since
$M_k(y)\succeq 0$, the above submatrix must also be positive
semidefinite, which is equivalent to the condition that
$y_{ w ^* v }y_{ v ^* w }\leq
y_{ w ^* w }y_{ v ^* v }$. Combining this relation
with the previous bound on $y_{ w ^* w }$ and the fact that
$y_{ v ^* w }=y^*_{w
^* v }$ which follows from
the hermicity of $M_k(y)$, we deduce that $|y_ w |\leq
C^{| w |}$ for all $| w |\leq 2k$.
\end{proof}
\begin{lemma}\label{lempr2}
Let $2k\geq \max \left\{\mathrm{deg}(p), \max_i\mathrm{deg}(q_i)\right\}$ and let $M_Q$ be Archimedean. Let $y$ be a feasible solution of the relaxation $\mathbf{R_{k-1+d_M}}$. Then $|y_w|\leq C^{|w|}$ for all $|w|\leq 2k$.
\end{lemma}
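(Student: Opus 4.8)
The plan is to reduce Lemma~\ref{lempr2} to Lemma~\ref{lempr1}. Lemma~\ref{lempr1} already gives the bound $|y_w|\leq C^{|w|}$ for any sequence satisfying $y_1=1$, $M_k(y)\succeq 0$, and $M_{k-1}(cy)\succeq 0$, where $c=C^2-(x_1^*x_1+\cdots+x_{2n}^*x_{2n})$. So the task is to show that a feasible solution $y$ of the relaxation $\mathbf{R_{k-1+d_\mathbf{M}}}$ automatically satisfies the localizing-matrix constraint $M_{k-1}(cy)\succeq 0$ associated with the polynomial $c$, even though $c$ need not be among the $q_i$. The hypothesis that $\mathbf{M}_Q$ is Archimedean is precisely what makes this possible: we have the algebraic identity
\begin{equation}
c = C^2-(x_1^*x_1+\cdots+x_{2n}^*x_{2n}) = \sum_i f_i^*f_i + \sum_i\sum_j g_{ij}^*q_i g_{ij}\,,
\end{equation}
with $d_\mathbf{M}=\max_{ij}\{\deg(f_i),\deg(g_{ij})+d_i\}$.

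The key step is to express the quadratic form $z^*M_{k-1}(cy)z$, for an arbitrary vector $z\in\mathbb{K}^{|\mathcal{W}_{k-1}|}$ with associated polynomial $z(x)=\sum_{|w|\leq k-1} z_w w$, in terms of the moment and localizing matrices that the feasibility of $\mathbf{R_{k-1+d_\mathbf{M}}}$ controls. Using the definition~(\ref{loc}) of the localizing matrix and the identity for $c$, one rewrites
\begin{equation}
z^*M_{k-1}(cy)z = L_y\!\left(z^* c\, z\right) = \sum_i L_y\!\left((f_i z)^*(f_i z)\right) + \sum_i\sum_j L_y\!\left((g_{ij} z)^* q_i (g_{ij} z)\right)\,.
\end{equation}
Here $f_i z$ is the polynomial whose coefficient vector I will call $a_i$, and $g_{ij}z$ is the polynomial with coefficient vector $b_{ij}$; their degrees are at most $(k-1)+d_\mathbf{M}$ for the first sum, and for the second sum $\deg(g_{ij}z)\leq (k-1)+d_\mathbf{M}-d_i$. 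One then recognizes $L_y\!\left((f_iz)^*(f_iz)\right) = a_i^* M_{k-1+d_\mathbf{M}}(y) a_i$ and $L_y\!\left((g_{ij}z)^* q_i (g_{ij}z)\right) = b_{ij}^* M_{k-1+d_\mathbf{M}-d_i}(q_i y) b_{ij}$. Since $y$ is feasible for $\mathbf{R_{k-1+d_\mathbf{M}}}$, we have $M_{k-1+d_\mathbf{M}}(y)\succeq 0$ and, because the relevant orders are $\leq k-1+d_\mathbf{M}$, also $M_{k-1+d_\mathbf{M}-d_i}(q_i y)\succeq 0$; hence every term on the right-hand side is nonnegative, giving $z^*M_{k-1}(cy)z\geq 0$. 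As $z$ was arbitrary, $M_{k-1}(cy)\succeq 0$. Also $y_1=1$ and $M_k(y)\succeq 0$ hold since $k\leq k-1+d_\mathbf{M}$ (recall $d_\mathbf{M}\geq 1$) and feasibility of $\mathbf{R_{k-1+d_\mathbf{M}}}$ implies positivity of all lower-order moment matrices. Lemma~\ref{lempr1} then yields $|y_w|\leq C^{|w|}$ for all $|w|\leq 2k$.

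The main thing to be careful about, rather than a genuine obstacle, is bookkeeping of degrees and indices: one must check that all the words appearing in $z^*cz$, in $(f_iz)^*(f_iz)$, and in $(g_{ij}z)^*q_i(g_{ij}z)$ indeed lie in $\mathcal{W}_{2(k-1+d_\mathbf{M})}$, so that $L_y$ is well-defined on them for the given sequence $y$, and that the coefficient vectors $a_i$, $b_{ij}$ are supported on words of length at most $k-1+d_\mathbf{M}$ and $k-1+d_\mathbf{M}-d_i$ respectively, so that the quadratic-form identifications with $M_{k-1+d_\mathbf{M}}(y)$ and $M_{k-1+d_\mathbf{M}-d_i}(q_i y)$ are legitimate. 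This is exactly where the definition of $d_\mathbf{M}$ enters, and where the constant $C$ must be the one coming from the Archimedean identity. Everything else is a direct translation of the argument already used in the proof of Lemma~\ref{posmom}.
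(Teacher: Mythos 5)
Your proposal is correct and follows essentially the same route as the paper's proof: exploit the Archimedean identity $c=\sum_i f_i^*f_i+\sum_{ij}g_{ij}^*q_ig_{ij}$ to deduce $M_{k-1}(cy)\succeq 0$ from the feasibility constraints $M_{k-1+d_\mathbf{M}}(y)\succeq 0$ and $M_{k-1+d_\mathbf{M}-d_i}(q_iy)\succeq 0$, and then invoke Lemma~\ref{lempr1}. The only difference is presentational: the paper first states the intermediate facts $M_{k-1}(f_i^*f_iy)\succeq 0$ and $M_{k-1}(g_{ij}^*q_ig_{ij}y)\succeq 0$ and sums the matrices, whereas you evaluate the quadratic form $z^*M_{k-1}(cy)z=L_y(z^*cz)$ directly; the underlying computation, including the degree bookkeeping via $d_\mathbf{M}$, is the same.
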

\begin{proof}
First note that if $f\in\mathbb{K}[x,x^*]_d$ is a polynomial of degree $d$ and $y$ a sequence such that $M_{k+d}(y)\succeq 0$, then $M_k(f^*fy)\succeq 0$. This follows from the fact that $M_{k+d}(y)\succeq 0$ implies $\sum_{|v|,|w|\leq k} \sum_{|\tilde v|,|\tilde w|\leq d} z^*_vf^*_{\tilde v} L_y(v^*\tilde v^* \tilde w w)z_wf_{\tilde w}\geq 0$ for all $z\in\mathbb{K}^{|\mathcal{W}_k|}$ and from the identity $\sum_{|v|,|w|\leq k} \sum_{|\tilde v|,|\tilde w|\leq d} z^*_vf^*_{\tilde v} L_y(v^*\tilde v^* \tilde w w)z_wf_{\tilde w}=\sum_{|v|,|w|\leq k}z^*_vL_y(v^*f^*fw)z_w=z^*M_{k}(f^*fy)z$. Similarly, if $g\in\mathbb{K}[x,x^*]_d$ is a polynomial of degree $d$ and $y$ a sequence such that $M_{k+d}(q_iy)\succeq 0$, then $M_k(g^*q_igy)\succeq 0$. Indeed, from $M_{k+d}(q_iy)\succeq 0$ we deduce that for all $z\in\mathbb{K}^{|\mathcal{W}_k|}$, $\sum_{|v|,|w|\leq k} \sum_{|\tilde v|,|\tilde w|\leq d} z^*_vg^*_{\tilde v} L_y(v^*\tilde v^* q_i\tilde w w)z_wg_{\tilde w}\geq 0$, and the left-hand side of this last inequality is equal to
$
\sum_{|v|,|w|\leq k}z^*_vL_y(v^*g^*q_igw)z_w=z^*M_{k}(g^*q_igy)z$.

Now, let $y$ be the optimal solution of the relaxation $\mathbf{R_{k-1+d_M}}$ as in the statement of the lemma and let $c=C^2-(x^*_1x_1+\cdots+x_{2n}^*x_{2n})$. As $\mathbf{M}_Q$ is Archimedean, we can write $c=\sum_i f_i^*f_i+\sum_{ij}g_{ij}^*q_ig_{ij}$, and thus $M_{k-1}(cy)=\sum_i M_{k-1}(f^*_if_iy)+\sum_{ij}M_{k-1}(g^*_{ij}q_ig_{ij}y)$. Since $M_{k-1+d_\mathbf{M}}(y)\succeq 0$ and $M_{k-1+d_\mathbf{M}-d_i}(q_iy)\succeq 0$, the argument outlined here above  implies that $M_{k-1}(f^*_if_iy)\succeq 0$ and $M_{k-1}(g^*_{ij}q_ig_{ij}y)\succeq 0$. This in turn implies $M_{k-1}(cy)\succeq 0$. From Lemma~\ref{lempr1}, we then deduce that $|y_w|\leq C^{|w|}$ for all $|w|\leq 2k$.
\end{proof}

\begin{lemma}\label{lempr3}
If $M_Q$ is Archimedean, the optima $p^k$ of the relaxations $\mathbf{R_k}$ form, for $k$ large enough, a monotically increasing bounded sequence. Therefore, the limit $\hat{p}=\lim_{k\rightarrow\infty} p^{k}$ exists.
\end{lemma}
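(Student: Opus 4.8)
The plan is to establish three things and then invoke the monotone convergence theorem for real sequences: (a) the numbers $p^k$ are non-decreasing in $k$; (b) they are bounded above by $p^\star$; and (c) for all sufficiently large $k$ they are finite (hence also bounded below). The Archimedean hypothesis enters through Lemma~\ref{lempr2}, which is precisely what makes item (c) — and hence the qualifier ``for $k$ large enough'' — necessary.

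For (a) I would reuse the observation already made when the relaxations were introduced: when $k\le k'$, the matrix $M_k(y)$ is a principal submatrix of $M_{k'}(y)$ and $M_{k-d_i}(q_iy)$ is a principal submatrix of $M_{k'-d_i}(q_iy)$, and a principal submatrix of a positive semidefinite matrix is positive semidefinite. Hence restricting any feasible point of $\mathbf{R_{k'}}$ to the coordinates $(y_w)_{|w|\le 2k}$ gives a feasible point of $\mathbf{R_k}$; since $p_w=0$ whenever $|w|>\text{deg}(p)$ and $2k\ge\text{deg}(p)$, the objective value is unchanged, so $p^k\le p^{k'}$. For (b), by assumption $\mathbf{P}$ has a feasible triple $(H,X,\phi)$; by Lemma~\ref{posmom} the sequence $y_w=\langle\phi,w(X)\phi\rangle$ is feasible for every $\mathbf{R_k}$ and has objective value $\langle\phi,p(X)\phi\rangle$, so $p^k$ is at most the infimum of $\langle\phi,p(X)\phi\rangle$ over feasible triples, namely $p^k\le p^\star$. (Archimedeanity additionally makes $p^\star$ finite, since it forces $p(X)$ to be a bounded operator for every $X\in\mathbf{S}_Q$.)

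For (c), set $k_0=\lceil\tfrac12\max\{\text{deg}(p),\max_i\text{deg}(q_i)\}\rceil$ and consider any relaxation order $K\ge k_0-1+d_\mathbf{M}$. Writing $K=k-1+d_\mathbf{M}$ with $k\ge k_0$, Lemma~\ref{lempr2} tells us that every feasible $y$ of $\mathbf{R_K}$ satisfies $|y_w|\le C^{|w|}$ for all $|w|\le 2k$, and in particular for every word $w$ with $p_w\ne 0$ (these have $|w|\le\text{deg}(p)\le 2k$). Consequently the objective is bounded on the whole feasible set, $\left|\sum_w p_w y_w\right|\le\sum_w|p_w|\,C^{|w|}=:B$, so $p^K\ge -B$. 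Combining with (a) and (b): for all $K\ge k_0-1+d_\mathbf{M}$ the number $p^K$ is real, the sequence $(p^K)_{K\ge k_0-1+d_\mathbf{M}}$ is non-decreasing and takes values in $[-B,p^\star]$, hence converges; its limit is $\hat p=\lim_{k\to\infty}p^k$.

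The only point needing a little care is the index bookkeeping in applying Lemma~\ref{lempr2}: one must choose the relaxation order $K$ large enough that the resulting bounds $|y_w|\le C^{|w|}$ reach every word occurring in $p$, which is exactly what the condition $k\ge k_0$ above guarantees. The rest is routine.
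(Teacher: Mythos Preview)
Your proof is correct and follows essentially the same approach as the paper: monotonicity of the $p^k$ (already noted when the relaxations were introduced), the upper bound $p^k\le p^\star$ via Lemma~\ref{posmom}, and the lower bound coming from Lemma~\ref{lempr2} applied at order $K=k-1+d_{\mathbf M}$ to bound the objective coefficients $|y_w|\le C^{|w|}$. The paper's version is slightly terser---it bounds $p^l$ at a single index $l=l'-1+d_{\mathbf M}$ and then sandwiches $p^l\le p^k\le p^\star$ for $k\ge l$---but your more explicit index bookkeeping is equivalent.
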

\begin{proof}
Let $l=l'-1+d_\mathbf{M}$ with $2l'\geq \max \left\{\mathrm{deg}(p), \max_i\mathrm{deg}(q_i)\right\}$, and let $y$ be the solution of the relaxation $\mathbf{R_{l}}$ with objective value $p^{l}$. From Lemma~\ref{lempr2}, the entries $y_w$ with $|w|\leq 2l'$ are bounded, i.e., $|y_w|\leq C^{|w|}$. Thus the solution $p^l=\sum_{|w|\leq l'}p_wy_w$ is bounded as well. We also have that $p^\star$ is bounded since for $\mathbf{M_Q}$ Archimedean, the positivity domain $\mathbf{S_Q}$ is bounded. For all $k\geq l$, $p^{l}\leq p^k\leq p^{k+1}\leq p^\star$. Thus the $\left(p^{k}\right)_{k\geq l}$ form a monotonely increasing bounded sequence and the limit $\hat{p}=\lim_{k\rightarrow\infty}
p^{k}$ exists.
\end{proof}
\begin{lemma}\label{lempr4}
Let $\mathbf{M}_Q$ be Archimedean and let $\hat p=\lim_{k\rightarrow\infty}
p^{k}$ be the limit of the optimal solutions $p^{k}$ of the relaxations $\mathbf{R_k}$. Then there exists an infinite sequence $\hat y=(\hat y_ w )_{| w |=0,1,\ldots}$ indexed in $\mathcal{W}_\infty$ such that $|y_w|\leq C^{|w|}$,
\begin{eqnarray}\label{momconstr}
\sum_ w  p_ w  \hat y_ w &=&\hat{p}\,,\nonumber\\
\hat y_1 &=&1\,,
\end{eqnarray}
and
\begin{eqnarray}\label{momconstr2}
M_k(\hat y)&\succeq& 0\,,\nonumber\\
M_{k-d_i}(q_i\hat y)&\succeq& 0 \quad i=1,\ldots,m
\end{eqnarray}
for all $k$ large enough.
\end{lemma}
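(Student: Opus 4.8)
The plan is to obtain $\hat y$ as a coordinatewise limit point of the sequence of near-optimal solutions of the relaxations $\mathbf{R_k}$, via a Bolzano--Weierstrass plus diagonalization argument over the countable index set $\mathcal{W}_\infty$, and then to verify that each required property survives this limit. Since $\mathbf{P}$ is feasible, every $\mathbf{R_K}$ is feasible by Lemma~\ref{posmom}, so $p^K$ is a finite real number. For each $K$ large enough that $\mathbf{R_K}$ is defined and Lemma~\ref{lempr2} applies, fix a feasible solution $y^{(K)}=(y^{(K)}_w)_{|w|\le 2K}$ of $\mathbf{R_K}$ whose objective value satisfies $0\le\sum_w p_w y^{(K)}_w-p^K\le 1/K$. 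By Lemma~\ref{lempr2}, $|y^{(K)}_w|\le C^{|w|}$ for every word $w$ with $|w|\le 2(K+1-d_\mathbf{M})$; hence, for each \emph{fixed} word $w$, the scalar $y^{(K)}_w$ is defined and obeys $|y^{(K)}_w|\le C^{|w|}$ for all sufficiently large $K$, so it eventually lies in the compact set $\{z\in\mathbb{K}\mid |z|\le C^{|w|}\}$. As $\mathcal{W}_\infty$ is countable, a standard diagonal extraction then yields a subsequence $(K_j)_j$ along which $\hat y_w:=\lim_{j\to\infty}y^{(K_j)}_w$ exists for every $w\in\mathcal{W}_\infty$, and taking limits in the bounds gives $|\hat y_w|\le C^{|w|}$.

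It remains to check the stated properties of $\hat y$. The normalization $\hat y_1=\lim_j y^{(K_j)}_1=1$ is immediate. Since $p$ has only finitely many nonzero coefficients, $\sum_w p_w y^{(K_j)}_w\to\sum_w p_w\hat y_w$; but this quantity differs from $p^{K_j}$ by at most $1/K_j$, and $p^{K_j}\to\hat p$ by hypothesis, whence $\sum_w p_w\hat y_w=\hat p$. For the positivity conditions, fix $k$. Every entry of $M_k(y)$ and of each $M_{k-d_i}(q_i y)$ is a fixed finite linear combination of scalars $y_u$ with $|u|\le 2k$, so along the subsequence the entries of $M_k(y^{(K_j)})$ and of $M_{k-d_i}(q_i y^{(K_j)})$ converge to those of $M_k(\hat y)$ and $M_{k-d_i}(q_i\hat y)$. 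For $j$ large enough $K_j\ge k$, and feasibility of $y^{(K_j)}$ for $\mathbf{R_{K_j}}$ gives $M_{K_j}(y^{(K_j)})\succeq 0$ and $M_{K_j-d_i}(q_i y^{(K_j)})\succeq 0$, which imply $M_k(y^{(K_j)})\succeq 0$ and $M_{k-d_i}(q_i y^{(K_j)})\succeq 0$. Since the positive semidefinite cone is closed, letting $j\to\infty$ gives $M_k(\hat y)\succeq 0$ and $M_{k-d_i}(q_i\hat y)\succeq 0$; as $k$ was arbitrary this holds for all $k$, in particular for all $k$ large enough, as claimed.

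I expect the only point needing care to be the diagonal extraction: one has to observe that Lemma~\ref{lempr2} supplies, for each fixed $w$, a bound $|y^{(K)}_w|\le C^{|w|}$ that is uniform in $K$ (for all large $K$), which is precisely what allows Bolzano--Weierstrass to be applied simultaneously to the infinitely many coordinates. Everything else reduces to the closedness of the defining conditions ($y_1=1$, positivity of moment and localizing matrices, the value of a finite linear functional) under entrywise limits, and so presents no real difficulty.
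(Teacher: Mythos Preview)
Your proof is correct and follows essentially the same approach as the paper: bound the feasible solutions via Lemma~\ref{lempr2}, extract a coordinatewise convergent subsequence by compactness, and verify that the defining constraints pass to the limit. The only cosmetic differences are that you use a diagonal Bolzano--Weierstrass extraction where the paper invokes Banach--Alaoglu on the unit ball of $l_\infty$ (after the renormalization $y_w\mapsto y_w/C^{|w|}$), and that you take near-optimal feasible points of $\mathbf{R_K}$ with objective within $1/K$ of $p^K$, whereas the paper uses convexity of $\mathbf{R_K}$ to pick feasible points whose objective is exactly $\hat p$.
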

\begin{proof}
For any $k$ such that $2k\geq \max \left\{\mathrm{deg}(p),\mathrm{deg}(q_i)\right\}$, let $y^{k-1+d_\mathbf{M}}$ be a feasible solution of the relaxation $\mathbf{R_{k-1+d_\mathbf{M}}}$ with objective value $\hat p$. Such a
solution always exists because the problem $\mathbf{R_{k-1+d_\mathbf{M}}}$ is convex
and there exist feasible points of $\mathbf{R_{k-1+d_\mathbf{M}}}$ with optimal values
$p_1$ and $p_2$ satisfying $p_1\leq \hat{p}\leq p_2$ (take for
instance $p_1=p^{k-1+d_\mathbf{M}}$ and $p_2=p^\star$).
By Lemma~\ref{lempr2}, the entries $y^{k-1+d_\mathbf{M}}_ w $ with $| w |\leq 2k$ are bounded, i.e., $|y^{k-1+d_\mathbf{M}}_w|\leq C^{|w|}$. Let $\tilde y^k$ be the restriction of the solution $y^{k-1+d_\mathbf{M}}$ to the $|w|\leq 2k$. That is, $\tilde y^k=(y^{k-1+d_\mathbf{M}}_w)_{|w|\leq 2k}$ is the subsequence of $y^{k-1+d_\mathbf{M}}$ composed of the entries $y^{k-1+d_\mathbf{M}}_ w $ with $| w |\leq 2k$. Complete $\tilde y^k$ with zeros to make it an infinite vector ${y}^{k}$ in $l_\infty$ and perform the renormalization
$y^{k}_ w \to z^{k}_ w ={y^{k}_ w }/{C^{| w |}}$.
Each vector $z^{k}$ thus belongs to the unit ball of $l_\infty$,
and the sequence
$\left(z^{k}\right)_{k\geq l}$ admits by the
Banach-Alaoglu theorem a subsequence $(z^{k_i})_{i=1,2,\ldots}$ that converges
in the weak$\text{-}\nolinebreak[4]\ast$ topology to a limit $\lim_{i\to\infty}z^{k_i}=\hat z$ \cite{reedsimon}.
This implies in particular pointwise convergence, i.e.,
$\lim_{i\to\infty} z^{k_i}_ w =\hat z_ w $ for all $ w $.
Define the infinite vector $\hat y$ through $\hat y_ w = \hat
z_ w  C^{| w |}$. The pointwise convergence $z^{k_i}\to \hat
z$ implies the pointwise convergence of $y^{k_i}\to \hat y$, i.e.,
$\lim_{i\to\infty} y^{k_i}_ w =\hat y_ w $ for all $ w $.
Since $\sum_ w  p_ w  y^k_ w =\hat{p}$, $y^k_1=1$, $M_k(y^{k'})\succeq 0$,
and $M_{k-d_i}(q_iy^{k'})\succeq 0$ ($i=1,\ldots,m_q$) for all $k,k'$ with $k'\geq k$, we
deduce Eqs.~(\ref{momconstr}) and (\ref{momconstr2}) from the pointwise convergence of $y^{k_i}\to \hat y$.
\end{proof}
\begin{proof}[Proof of Theorem 1]
By Lemma~\ref{lempr3}, the limit $\hat{p}=\lim_{k\rightarrow\infty}
p^{k}$ exists. We obviously have that $\hat{p}\leq p^\star$. We
now show that there exist a set of operators $\hat{X}$ and a vector
$\hat\phi$ in a Hilbert space $\hat H$ (possibly of infinite
dimension) that yield a feasible solution of $\mathbf{P}$
with objective value $\hat{p}$. Thus, we also have that $\hat{p}\geq
p^\star$, and therefore $\hat{p}=p^\star$. Incidentally, this implies that the minimum appearing in equation (\ref{polyncprog2}) is well defined, i.e., it is not an infimum, as one would have expected in general.

To build $(\hat H,\hat X,\hat \phi)$, we perform a Gelfand-Naimark-Segal like construction. Let $\hat y$ be the infinite sequence defined in Lemma~\ref{lempr4}. Consider the linear functional $L_{\hat y}: \mathbb{K}[x,x^*]\mapsto \mathbb{K}$, $p\mapsto L_{\hat y}(p)=
\sum_ w  p_ w  \hat y_ w $. Since $M_k(\hat y)\succeq 0$ for all $k$, this linear functional is positive in the sense that $L_{\hat y}(p^*p)=\sum_{v,w} p^*_vL_{\hat y}(v^*w)p_w\geq 0$ for all $p$. It thus defines a semi-inner product on $\mathbb{K}[x,x^*]$ through
\begin{equation}\label{innerproduct}
\langle p,q\rangle=L_{\hat y}(p^*q)\,.
\end{equation}
Define the set
\begin{equation}
I=\{p\in \mathbb{K}[x,x^*]\mid \langle p,p\rangle=0\}\,.
\end{equation}
By the Cauchy-Schwarz inequality (which is valid for semi-inner products), the set $I$ is a linear subspace of $\mathbb{K}[x,x^*]$. Moreover, it is a left ideal of $\mathbb{K}[x,x^*]$. To show that $I$ is a left ideal of $\mathbb{K}[x,x^*]$, it is sufficient to show that $x_i I \subseteq I$ for all $i=1,\ldots,2n$. Since $\mathbf{M}_Q$ is Archimedean, there is some $C$ such that $c=C^2-\sum_i x_i^*x_i \in \mathbf{M}_Q$ and, as in the proof of Lemma~\ref{lempr2}, one can show that $M_k(c\hat y)\succeq 0$, from which it follows that
\begin{equation}\label{leftid}
0\leq L_{\hat y}(p^*cp)=C^2L_{\hat y}(p^*p)-\sum_iL_{\hat y}(p^*x_i^*x_ip).
\end{equation}
Since $L_{\hat y}(p^*x_i^*x_ip)\geq 0$ for all $i$, (\ref{leftid}) implies that
\begin{equation}\label{leftid2}
0\leq L_{\hat y}(p^*x_i^*x_ip)\leq C^2L_{\hat y}(p^*p)\,.
\end{equation}
For all $p\in I$, we thus have that $L_{\hat y}(p^*x_i^*x_ip)=0$, that is, $x_ip\in I$.

The definition (\ref{innerproduct}) of $\langle\cdot,\cdot\rangle$ induces a well defined inner product on the quotient $\mathbb{K}[x,x^*]/I$. Let $\hat H$ denote the Hilbert space corresponding to the completion of $\mathbb{K}[x,x^*]/I$ with respect to this scalar product.
We will now construct operators $\hat X$ on $\hat H$. For every $x_i$, let $\hat X_i$ be the operator of left multiplication by $x_i$ on $\mathbb{K}[x,x^*]/I$, i.e.,
\begin{equation}
\hat X_i  (p+I)=x_ip+I\,.
\end{equation}
Since $I$ is a left ideal, this map is well-defined for every $x_i$. It is linear, and by (\ref{leftid2}) it is bounded. Thus it extends uniquely to a bounded operator on $H$, which we denote by the same symbol $\hat X_i$.
Note that the map is also consistent with the involution on $\mathbb{K}[x,x^*]$, i.e., it satisfies $\hat X^*_i=\hat X_{i+n}$, since $\langle p,X^*_iq\rangle=\langle X_ip, q\rangle=\langle x_ip,q\rangle=L_{\hat y}(p^* x^*_iq)=L_{\hat y}(p^* x_{i+n}q)=\langle p,X_{i+n}q\rangle$.

Let $\hat\phi$ be the vector of $\mathbb{K}[x,x^*]/I$ corresponding to the identity polynomial $1$. The fact that $\hat y_1=1$ implies that the vector $\hat\phi$ is
normalized: $\langle \hat\phi,\hat\phi\rangle=1$. From (\ref{momconstr}), it follows that
\begin{eqnarray}
\langle \hat\phi,p(\hat{X})\hat\phi\rangle &=&\sum_ w  p_ w
\langle \hat\phi, w (\hat{X})\hat\phi\rangle =\sum_ w  p_ w
\langle 1,  w \rangle=\sum_ w  p_ w
\hat y_ w  =\hat{p}\,.
\end{eqnarray}
To show that $(\hat H, \hat X, \hat \phi)$ yields a feasible
solution to $\mathbf{P}$ with objective value $\hat{p}$, it
remains to show that the operators $\hat X$ satisfy $q_i(\hat
X)\succeq 0$ ($i=1,\ldots,m$), i.e., that
$\langle r,q_i(\hat X) r\rangle\geq 0$ for all $r\in \hat H$. But since any $r\in\hat H$ can be approximated to
arbitrary precision by elements of the pre-Hilbert space $\mathbb{K}[x,x^*]/I$, it is sufficient to show that
$\langle p,q_i(\hat X) p\rangle\geq 0$ for all $p\in \mathbb{K}[x,x^*]$. This follows from
\begin{equation}
\langle p,q_i(\hat X) p\rangle=\langle p,q_i p\rangle=L_{\hat y}(p^*q_i p)=\sum_{v,w}p^*_vL_{\hat y}(v^*q_iw)p_w\geq 0\,,
\end{equation}
since $M_{k-d_i}(q_i\hat y)\succeq0$ for all $k$.
\end{proof}

\subsection{Optimality detection and extraction of
optimizers}
\label{detectoptim}
In this subsection, we introduce a
criterion that allows to detect whether the relaxation of order $k$
already yields the optimal value $p^\star$. If so, it is possible to
extract a global optimizer $(H^\star,X^\star, \phi^\star)$ from the
optimal solution of this relaxation. The procedure to extract this
optimizer is described in the proof of the following theorem.
\begin{theorem}
Assume that the optimal solution $y^{k}$ of the relaxation of
order $k$ satisfies
\be
\mathrm{rank}\,M_k(y^{k})=\mathrm{rank}\,M_{k-d}(y^{k}),
\label{rankloop}\ee where
$d=\max_i d_i\geq 1$. Then
$p^{k}=p^\star$, i.e., the optimum of the relaxation of order $k$ is
the global optimum of the original problem \eqref{polyncprog2}.
Moreover, there exists a global optimizer
$(H^\star,X^\star,\phi^\star)$ of (\ref{polyncprog2}) with
$\dim{H^\star}=\mathrm{rank}\,M_{k-d}(y^{k})$.
\end{theorem}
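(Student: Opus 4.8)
The plan is to run, on the \emph{finite}-dimensional space forced by the rank condition, the same Gelfand--Naimark--Segal construction as in the proof of Theorem~1, so that the optimizer Hilbert space $H^{\star}$ is finite-dimensional of dimension exactly $r:=\mathrm{rank}\,M_{k-d}(y^{k})$. First I would record a purely linear-algebraic consequence of \eqref{rankloop}: since $M_{k-d}(y^{k})$ is a principal submatrix of $M_{k-1}(y^{k})$, which is a principal submatrix of $M_{k}(y^{k})$, rank monotonicity together with \eqref{rankloop} forces $\mathrm{rank}\,M_{j}(y^{k})=r$ for every $j$ with $k-d\le j\le k$; in particular $M_{k}(y^{k})$ is a ``flat extension'' of $M_{k-1}(y^{k})$. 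On $\mathbb{K}[x,x^{*}]_{k}$ put the positive semidefinite form $\langle p,q\rangle=L_{y^{k}}(p^{*}q)$ (positive because $M_{k}(y^{k})\succeq 0$), and for $j\le k$ set $J_{j}=\{p\in\mathbb{K}[x,x^{*}]_{j}\mid\langle p,p\rangle=0\}$. The Cauchy--Schwarz inequality for semi-inner products gives $J_{j}=J_{k}\cap\mathbb{K}[x,x^{*}]_{j}$, so the inclusions induce injections $\mathbb{K}[x,x^{*}]_{j}/J_{j}\hookrightarrow\mathbb{K}[x,x^{*}]_{k}/J_{k}$; since $\dim\bigl(\mathbb{K}[x,x^{*}]_{j}/J_{j}\bigr)=\mathrm{rank}\,M_{j}(y^{k})$, these become isomorphisms for $k-d\le j\le k$. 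I would then take $H^{\star}:=\mathbb{K}[x,x^{*}]_{k-1}/J_{k-1}$, a Hilbert space of dimension $r$, and write $v\mapsto[v]$ for the quotient map, regarding it (via the isomorphisms above) as defined on all words of length $\le k$ and on $\mathbb{K}[x,x^{*}]_{k-d}$ and $\mathbb{K}[x,x^{*}]_{k-d_{i}}$ as well.

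Next I would define $X^{\star}_{i}$ to be ``left multiplication by $x_{i}$'': for $p\in\mathbb{K}[x,x^{*}]_{k-1}$, set $X^{\star}_{i}[p]=[x_{i}p]$, where $x_{i}p\in\mathbb{K}[x,x^{*}]_{k}$ is reduced back into $H^{\star}$ using $\mathbb{K}[x,x^{*}]_{k}/J_{k}\cong H^{\star}$. The crux---and the main obstacle---is well-definedness, i.e. that $p\in J_{k-1}$ implies $x_{i}p\in J_{k}$. Here one first notes, by Cauchy--Schwarz, that $p\in J_{k-1}$ already forces $\langle v,p\rangle=0$ for all $|v|\le k$; then $\langle w,x_{i}p\rangle=0$ for $|w|\le k$ is checked via $\langle w,x_{i}p\rangle=\langle x_{i}^{*}w,p\rangle$ when $|w|\le k-1$, and when $|w|=k$ by first replacing $w$ modulo $J_{k}$ by a representative of degree $\le k-1$---which is possible precisely because $\mathrm{rank}\,M_{k}(y^{k})=\mathrm{rank}\,M_{k-1}(y^{k})$---and then reducing to the previous case. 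The same bookkeeping shows $(X^{\star}_{i})^{*}=X^{\star}_{i+n}$, and boundedness of $X^{\star}_{i}$ is automatic since $H^{\star}$ is finite-dimensional. I would set $\phi^{\star}=[1]$, so that $\langle\phi^{\star},\phi^{\star}\rangle=y^{k}_{1}=1$.

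It then remains to check that $(H^{\star},X^{\star},\phi^{\star})$ is feasible for $\mathbf{P}$ with objective value $p^{k}$. A short induction on $|u|$ shows $u(X^{\star})[p]=[up]$ whenever $p\in\mathbb{K}[x,x^{*}]_{k-1}$ and $|u|+\deg p\le k$ (no boundary effects arise inside this induction, only the well-definedness of the individual $X^{\star}_{i}$). Applying it with $p=1$ gives $w(X^{\star})\phi^{\star}=[w]$ for $|w|\le k$; splitting an arbitrary word of length $\le 2k$ as $w=uv$ with $|u|,|v|\le k$, using $w(X^{\star})=u(X^{\star})v(X^{\star})$ and $(X^{\star}_{i})^{*}=X^{\star}_{i+n}$ (so $u(X^{\star})^{*}=u^{*}(X^{\star})$), one gets $\langle\phi^{\star},w(X^{\star})\phi^{\star}\rangle=\langle[u^{*}],[v]\rangle=L_{y^{k}}(uv)=y^{k}_{w}$, hence $\langle\phi^{\star},p(X^{\star})\phi^{\star}\rangle=\sum_{w}p_{w}y^{k}_{w}=p^{k}$. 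For the constraints, every vector of $H^{\star}$ equals $[p]$ for some $p\in\mathbb{K}[x,x^{*}]_{k-d_{i}}$ (using $\mathrm{rank}\,M_{k-d_{i}}(y^{k})=r$, valid since $d_{i}\le d$); splitting each word $u$ of length $\le\deg q_{i}\le 2d_{i}$ occurring in $q_{i}$ as $u=u_{1}u_{2}$ with $|u_{1}|,|u_{2}|\le d_{i}$ and using $u_{1}(X^{\star})[p]=[u_{1}p]$, $u_{2}(X^{\star})[p]=[u_{2}p]$ gives $\langle[p],q_{i}(X^{\star})[p]\rangle=L_{y^{k}}(p^{*}q_{i}p)=z^{*}M_{k-d_{i}}(q_{i}y^{k})z\ge 0$, where $z$ is the coefficient vector of $p$ and $M_{k-d_{i}}(q_{i}y^{k})\succeq 0$ because $y^{k}$ is feasible for $\mathbf{R_k}$; hence $q_{i}(X^{\star})\succeq 0$. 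Throughout, the role of the splittings is to keep all moments $L_{y^{k}}(\cdot)$ that appear supported on words of length $\le 2k$, which are exactly the entries on which $y^{k}$ is defined.

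Putting this together, $(H^{\star},X^{\star},\phi^{\star})$ is a feasible point of $\mathbf{P}$ of value $p^{k}$, so $p^{\star}\le p^{k}$; combined with $p^{k}\le p^{\star}$ (valid for every relaxation) this yields $p^{k}=p^{\star}$, and $(H^{\star},X^{\star},\phi^{\star})$ is a global optimizer with $\dim H^{\star}=r=\mathrm{rank}\,M_{k-d}(y^{k})$. I expect the only genuinely delicate step to be the well-definedness of the operators $X^{\star}_{i}$ in the second paragraph (the flat-extension phenomenon); the rest is the degree counting needed to transport identities between the quotients $\mathbb{K}[x,x^{*}]_{j}/J_{j}$.
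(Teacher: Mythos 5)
Your proposal is correct and is essentially the paper's own proof in quotient-space language: the space $\mathbb{K}[x,x^{*}]_{k-1}/J_{k-1}$ is exactly the span of the Gram vectors $\overline{w}$ used in the paper, the operators are the same left multiplications, $\phi^{\star}=[1]$, the moments are computed by the same word-splitting $w=uv$ with $|u|,|v|\le k$, and $q_i(X^{\star})\succeq 0$ is read off from $M_{k-d_i}(q_iy^{k})\succeq 0$ just as in the paper. The only (minor) variation is the well-definedness check of $[p]\mapsto[x_ip]$, which you settle via Cauchy--Schwarz plus flatness between levels $k-1$ and $k$, while the paper pairs against the spanning vectors $\overline{v}$ with $|v|\le k-d$; both hinge on the same rank hypothesis and the argument goes through.
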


\begin{proof}
We show that when (\ref{rankloop}) holds we can find a solution $(H,
X, \phi)$ to (\ref{polyncprog2}) with objective value $p^{k}$.
This implies that $p^{k}\geq p^\star$, and thus $p^{k}=p^\star$
since we also have $p^{k}\leq p^\star$.

Let $r=\mathrm{rank}\,M_k(y^{k})=\mathrm{rank}\,M_{k-d}(y^{k})$. Since the moment matrix $M_{k}({y}^{k})$ is positive semidefinite, it admits a Gram decomposition. That is, to each row (and column), indexed by a $w$ with $|w|\leq k$, can be  associated a vector $\overline{w}\in\mathbb{K}^{r}$ such that $M_k(y^{k})(w, v )=y^{k}_{w^* v }=\langle \overline{w},\overline{ v }\rangle$. We define the Hilbert space $H$ as
$H=\text{span}\{\overline{w}\mid| w |\leq k\}$, with dimension $\dim{H}
= r$. Note that (\ref{rankloop}) implies that
\be H=\mbox{span}\{\overline{w}\mid| w |\leq
k\}=\mbox{span}\{\overline{w}\mid| w |\leq k-d\}.
\label{vectorial}\ee
We now define $2n$ linear operators $X_i$ through their actions on the $\overline{w}$'s with $|w|\leq k-1$ in the following way
\begin{equation}\label{defxopt}
X_i\overline{w}=\overline{x_iw}\,.
\end{equation}
Note that when $d\geq 1$, the operators are well defined on the whole space $H$ since by (\ref{vectorial}) the set of vectors $\overline{w}$ with $|w|\leq k-d\leq k-1$ span $H$. This definition is also consistent in the sense that if $\overline{f}\in H$ admits two different decompositions $\overline{f}=\sum a_w\overline{w}=\sum b_w\overline{w}$ as a linear combination of the vectors $\{\overline{w}:|w|\leq
k-1\}$, then $\sum a_w\overline{x_iw}=\sum b_w\overline{x_iw}$.
Indeed the following equality
\begin{eqnarray}
\langle\overline{v},\sum_w(a_w-b_w)\overline{x_iw}\rangle&=&\sum_w (a_w-b_w)y_{v^*x_iw}=\sum_w (a_w-b_w)y_{(x_i^*v)^*w}\nonumber\\
&=&\langle\overline{x_i^*v},\sum_w (a_w-b_w)\overline{w}\rangle=\langle\overline{x_i^*v},0\rangle=0\,,
\end{eqnarray}
holds for all $\overline{v}$, with $|v|\leq k-d\leq
k-1$. Since these vectors span the Hilbert space $H$, this implies that both vectors
$\sum a_w\overline{x_iw}$ and $\sum b_w\overline{x_iw}$ are identical.
The definition (\ref{defxopt}) is also consistent with the involution on $\mathbb{K}\langle x\rangle$, i.e., it satisfies $\hat X^*_i=\hat X_{i+n}$. Indeed, for all $v,w$ of length $|v|,|w|\leq k-1$,
\be
\langle \overline{v},X^*_i\overline{w}\rangle=\langle X_i\overline{v}, \overline{w}\rangle=\langle \overline{x_iv},\overline{w}\rangle=y^k_{v^* x^*_iw}=y^k_{v^* x_{i+n}w}=\langle \overline{v},X_{i+n}\overline{w}\rangle\,.
\ee

Let us now, define  $\phi=\overline{1}$. Let $w$ be of length $|w|\leq 2k$ and write $w=w_1w_2$ with $|w_1|,|w_2|\leq k$. Then $\langle\phi,w(X)\phi\rangle=\langle\overline{w_1^*},\overline{w_2}\rangle=y^k_{w_1w_2}=y^k_w$. This implies that $\langle\phi,p(X)\phi\rangle=\sum_{|w|\leq 2k} p_w \langle\phi, w(X)\phi\rangle=\sum_{|w|\leq 2k} p_w y^k_w=p^k$. It remains to check that the operators $X$
satisfy $q_i({X})\succeq 0$. To verify this it is only necessary, because of (\ref{vectorial}), to show that the matrix $A$ with entries
$A(v,w)= \langle \overline{v},q_i({X})\overline{w}\rangle$
with $|v|,|w|\leq k-d$ is a positive semidefinite matrix. This is the case, since $A$ is equal to $M_{k-d}(q_i
y^{k})$, and is thus a submatrix of $M_{k-d_i}(q_i y^{k})\succeq 0$, which is itself positive semidefinite because $y^{k}$ is a solution of the
relaxation of order $k$.
\end{proof}

Note that there exists a related optimality detection criterion in the commutative case, which is based on the flat extension theorem of Curto and Fialkow \cite{curto, laurent}. The matrix $M_k(y^k)$ is said to be a \emph{flat extension} of $M_{k-d}(y^k)$ if $\mathrm{rank}\,M_k(y^{k})=\mathrm{rank}\,M_{k-d}(y^{k})$. When this condition holds, the flat extension theorem permits (in the commutative case) to extend the finite sequence $y^k$ to an infinite sequence $\hat y$ satisfying $\mathrm{rank}\,M_{k'}(\hat y)=\mathrm{rank}\,M_{k}(y^k)$ for all $k'\geq k$. The proof of Theorem~2 yields an NC analogue of this important result (simply define the infinite sequence $\hat y$ through $\hat y_w=\langle \phi, w(X)\phi\rangle$ where $\phi$ and $X$ are the vectors and operators defined in the proof of Theorem~2).

\subsection{Relation to the Positivstellensatz for non-commutative
polynomials} \label{proofpos}\
We now explain the link between the
convergence of the SDP relaxations and the Positivstellensatz for
non-commutative polynomials introduced by Helton and McCullough
\cite{heltonmc}. We proceed by analogy with the link that exists in
the commutative case between the convergence of Lasserre's
relaxations \cite{lasserre} and Putinar's Positivstellensatz
\cite{putinar}.

Consider the problem
\begin{equation}\label{dual2}
\begin{array}{cccll}
\lambda^{k}&=&\displaystyle\max_{\lambda,b_i,c_{ij}} &\multicolumn{1}{l}{\lambda}\\
&&\mathrm{s.t.}&p-\lambda= \sum_j
b^*_jb_j+\sum_{i=1}^{m}\sum_j
c^*_{ij}q_ic_{ij}\\
&&& \max_{j}\text{deg}(b_j)\leq k,\\
&&&\max_{j}\text{deg}(c_{ij})\leq k-d_i\,,
\end{array}
\end{equation}
where $b_j$ and $c_{ij}$ are polynomials.
The expression $\sum_i b^*_ib_i$ is known as a sum of squares (SOS) and the above problem is a polynomial SOS problem. As shown in Appendix B, this polynomial SOS problem can be formulated as an SDP problem, which turns out to be the dual of $\mathbf{R_k}$. This implies that the optimal solution of (\ref{dual2}) provides a lower bound on the solution of $\mathbf{R_k}$, i.e.,
\be\label{lklpk}
\lambda^{k}\leq p^{k}\,.
\ee
Alternatively, this last relation can be established  as follows. Let $\lambda$, $b_j$, $c_{ij}$ be a feasible solution of (\ref{dual2}) and $y$ be a feasible solution of (\ref{relax}). We show that $L_y(p-\lambda)=\sum_{w
}p_ w  y_ w  -\lambda\geq 0$, which implies (\ref{lklpk}).
As $L_y(p-\lambda)=\sum_jL_y(b_j^*b_j)+\sum_i\sum_jL_y(c_{ij}^*q_ic_{ij})$, it is sufficient to show that $L_y(b_j^*b_j)\geq 0$ and that $L_y(c_{ij}^*q_ic_{ij})\geq 0$. Writing $b_j=\sum_ w  b_{j,w }w $, we find
\begin{eqnarray}
L_y(b^*_jb_j) & = & \sum_{ v , w } b^*_{j,v }L_y(v^*w)b_{j, w }\nonumber\\
& = &\sum_{v,w }b^*_{j,v
}M_k(y)( v , w )b_{j, w }\geq 0\,,
\end{eqnarray}
where we have used the definition (\ref{mom}) of the moment matrix $M_k(y)$ in the second equality and the property that $M_k(y)\succeq 0$ to deduce the last inequality. Similarly,
\begin{eqnarray}
L_y(c^*_{ij}q_ic_{ij}) &= & \sum_{ v , w }c^*_{ij,v} \sum_ u  q_{i,u } L_y(v^*uw ) c_{ij,w}\nonumber\\
&= & \sum_{ v,w } c^*_{ij, v } M_k(q_iy)( v,w)c_{ij,w}\geq 0\,,
\end{eqnarray}
where we have used the definition (\ref{loc}) of the localizing matrix $M_k(q_iy)$ and the property $M_k(q_iy)\succeq 0$.

So far, we thus have that  $\lambda^{k}\leq p^{k}\leq p^\star$ for all $k$. We note now from the definition (\ref{polyncprog2}) that for any $\epsilon>0$, the polynomial $p(X)-\left(p^\star-\epsilon\right)$ is strictly positive on $\mathbf{S}_Q$. It then follows from the Positivstellensatz representation theorem of Helton and McCullough\footnote{The proof given by Helton and
McCullough only covers the case of polynomials with real
coefficients, but it is straightforward to generalize it to the
complex case, see for instance \cite{doherty}.} \cite{heltonmc} that
\be\label{soshc}
p-p^\star+\epsilon = \sum_jb_j^*b_j +\sum_i\sum_j c_{ij}^*q_ic_{ij}
\ee
for some polynomials $b_j$ and $c_{ij}$. Let $k\geq \max_{ij}\left\{\text{deg}(b_j),\text{deg}(c_{ij})+d_i\right\}$. Then $(\lambda, b_i, c_{ij})$ is a feasible solution of (\ref{dual2}) with objective value $p^\star-\epsilon$ and therefore $\lambda^{k}\geq p^\star-\epsilon$. It follows that $p^\star-\epsilon\leq
\lambda^{k}\leq p^{k}\leq p^\star$, which implies  $p^{k}\rightarrow p^\star$ since $\epsilon>0$ is arbitrary.

We thus have just shown that the convergence of the relaxations
$\mathbf{R_k}$ can be proved, alternatively to the proof given in
Subsection~\ref{csr}, using the Positivstellensatz for
non-commutative polynomials. In fact, both proofs are somewhat
equivalent and the proof presented in Subsection~\ref{csr} can
itself be viewed as an undirect proof of the Positivestellensatz for
non-commutative polynomials. The advantage of the proof given in
Subsection~\ref{csr} is that it is more constructive in spirit and it inspired the proof of Theorem~2 where a procedure is given to build an optimizer
$(H^\star,X^\star,\phi^\star)$. The proof that we have just given, on the other hand,
connects with the fascinating theory of positive polynomials. We see
for instance that an a priori bound on the maximal
degree $k$ necessary in the SOS decomposition (\ref{soshc}) would
yield information on the speed of convergence of the relaxations $\mathbf{R_k}$.

\subsection{Dealing with equality constraints}\label{dealeq}
The problem $\mathbf{P}$ can contain a set
of equality constraints $e_i(X)=0$ ($i=1,\ldots,m_e)$, which can be
enforced through the pairs of inequalities
$e_i(X)\succeq 0$ and $-e_i(X)\succeq 0$. Rather than writing down directly the corresponding relaxations $\mathbf{R}_k$, it can be advantageous to exploit these equalities to reduce the complexity of the problem.

The set of equalities
\be
E=\{e_i\mid i=1,\ldots,m_e\}\subseteq \mathbb{K}[x,x^*]
\ee
generates the ideal
\be I=\{\sum_{i} f_ie_ig_i\mid f_i,g_i\in \mathbb{K}[x,x^*]\}\,,\ee
which is such that any $p\in I$ satisfies $p(X)=0$ for operators $X$ such that $e_i(X)=0$ ($i=1,\ldots,m_e$). It is therefore sufficient to express every polynomial $p\in \mathbb{K}[x,x^*]$ modulo $I$, that is, to work in the quotient ring $\mathbb{K}[x,x^*]/I$. Let $B$ denote a monomial basis for $\mathbb{K}[x,x^*]/I$. Then we only need to consider polynomial expressions of the form $q=\sum_{ w \in B} q_ w  w $ since for every polynomial $p\in \mathbb{K}[x,x^*]$, there exists a unique $q=\sum_{w
\in B} q_ w  w $ such that $p-q\in I$. It is readily seen that all the results presented so far still hold when we work at relaxation step $k$ with the reduced monomial basis $B_k=B\cap W_{k}$. The relaxation $\mathbf{R_k}$ then corresponds to an optimization over the set variables $(y_w
)_{ w \in B_{2k}}$ and involves matrices $M_k(y)$ and $M_{k-d_i}(g_iy)$ of sizes $|B_{k}|\times |B_{k}|$ and $|B_{k-d_i}|\times |B_{k-d_i}|$, respectively. This represents a reduction in the complexity of the original problem.

All the problem of course consists in building a monomial basis $B$ for the quotient ring $\mathbb{K}[x,x^*]/I$. This can be done, e.g., if a finite Gr\"obner basis exists and can be computed efficiently for the ideal $I$ \cite{mora}. Here below we give two examples where such a reduced monomial basis $B$ is readily obtained.

\subsubsection{Hermitian variables}\label{hermsec}
Polynomials in hermitian variables are elements of the
free $*$-algebra $\mathbb{K}[ x]$ with generators
$x=(x_1,\ldots,x_n)$ and anti-involution $\ast$ defined on letters as $x_i^*=x_i$. Our previous results carry over to this situation if words are now viewed as built on the $n$ letters $x_1,\ldots,x_n$ rather than the $2n$ letters $x_1,\ldots,x_n,x_{n+1},\ldots,x_{2n}$ and if the anti-involution $*$ is re-interpreted accordingly.
 Since the algebra is now based on $n$ generators, the set of words of length $d$ has $|\mathcal{W}_d|=(n^{k+1}-1)/(n-1)$ elements, compared to
$((2n)^{k+1}-1)/(2n-1)$ for the general case in $2n$ variables. The size of the
optimization variables $y$ and the dimension of the moment and
localizing matrices in the SDP problem $\mathbf{R_k}$ are reduced accordingly.

\subsubsection{Commuting variables and link with Lasserre's results} The method that
we have presented to solve optimization problems in non-commuting
variables also contains, as a particular case, the commutative
version (\ref{polyprog}) considered by Lasserre since constraints of
the type $X_iX_j-X_jX_i=0$ can explicitly be imposed on the
operators $X_i$. More precisely,  the problem
\begin{equation}\label{polyprogcomm}
\begin{array}{llcll}
{p^{c}}&=&\displaystyle\min_{(H,X,\phi)}& \multicolumn{1}{l}{\langle \phi, p(X) \phi\rangle}\\
&&\text{s.t.}&q_i(X)\succeq
0&\quad i=1,\ldots,m \\
&&&X_iX_j-X_jX_i=0,&\quad i,j=1,\ldots,n\,,
\end{array}
\end{equation}
where the variables $X_i$ are assumed to be hermitian and all
polynomials are expressed in terms of real coefficients, is identical
to (\ref{polyprog}). To show that (\ref{polyprogcomm}) and
(\ref{polyprog}) are equivalent note that the operators $X$ in any
feasible solution $(H,X,\phi)$ of (\ref{polyprogcomm}) generate an
abelian algebra. Hence the Hilbert space $H$ (or at least the part
of $H$ on which the operators $X$ and the state $\phi$ have support)
is isomorphic to a direct integral $\int^{\oplus}H_x\,d\mu(x)$ of
one-dimensional Hilbert spaces $H_x$, and the operators $X_i$ are
decomposable as $X_i=\int^{\oplus} x_{i}\, d\mu(x)$, where each
$x_{i}$ is a scalar operator that acts only on $H_x$ \cite{neumann}.
A priori, any point $x\in \mathbb{R}^n$ defines a possible $n$-uple
of operators $(x_{1},\ldots,x_{n})$ and can be associated with
a factor $H_x$, but to satisfy (\ref{polyprogcomm}) the measure $d\mu(x)$ should be such that $\int_{S} d\mu(x)=1$ and $\int_{\mathbb{R}^n\setminus S} d\mu(x)=0$,
where $S=\{x\in\mathbb{R}^n\mid q_i(x)\geq 0,\; i=1,\ldots,m\}$.
Thus (\ref{polyprogcomm})  is equivalent to
\begin{equation}\label{polyprogmom}
\begin{array}{llcll}
p^c&=&\displaystyle\min_{\mu}& \multicolumn{1}{l}{\displaystyle\int p(x)d\mu(x)}\\
&&\text{s.t.}& \displaystyle\int_{S}
d\mu(x)=1,\;\int_{\mathbb{R}^n\setminus S} d\mu(x)=0\,,
\end{array}
\end{equation}
where the minimum is taken over all measures $\mu$ on
$\mathbb{R}^n$. As shown by Lasserre \cite{lasserre}, the problems
(\ref{polyprogmom}) and (\ref{polyprog}) are equivalent. Indeed, as
$p(x)\geq p^\star$ on $S$, $\int p d\mu\geq p^\star$ and thus
$p^c\geq p^\star$. On the other hand, if $x^\star$ is a global
minimizer of (\ref{polyprog}), then the measure
$\mu^\star=\delta_{x^\star}$ is admissible for (\ref{polyprogmom}),
and thus $p^c\leq p^\star$.

The relaxations $\mathbf{R_k}$ are constructed on the canonical basis
of non-commutative monomials, for instance for $n=2$,
$\mathcal{W}_2=\{1,x_1,x_2,x_1^2,x_1x_2,x_2x_1,x_2^2\}$. Simplifying these relaxations using the constraints
$x_ix_j-x_jx_i=0$ amounts to consider only the canonical basis of commutative
monomials, e.g.,
$\mathcal{W}^c_2=\{1,x_1,x_2,x_1^2,x_1x_2,x^2\}$, which lead to the exact same construction as the one introduced
by Lasserre.
In particular, the criterion for detecting global optimality presented in subsection~\ref{detectoptim} coincides with the
detection criterion introduced in the commutative situation
\cite{henrion2}. If we apply the procedure outlined in the proof of
Theorem~2 to extract optimal solutions from the solution of a finite order
relaxation $\mathbf{R_k}$, we end up with a set of operators $X=(X_1,\ldots,X_n)$ which are matrices each of dimension $r=\mathrm{rank}\,M_k(y^{k})$. As these matrices all commute, they can be simultaneously diagonalized, with each set of common eigenvalues
$(x_1(j),\ldots,x_n(j))$ $(j=1,\ldots,r)$ corresponding to one
optimal solution of (\ref{polyprog}). We thus see that if the rank of the moment matrix $r=\text{rank}\,M_k(y^{(k)})$ is related to the Hilbert space dimension of the global optimal solution in the
non-commutative case, it is related to the number of
global solutions extracted by the algorithm in the commutative case.

It is interesting to note that most of our results, such as the convergence of the hierarchy or the criterion to detect optimality, are easier to establish in the general non-commutative framework than they are in the specialized commutative case. Note also that it may be easier, from a computational point of view, to solve the non-commutative version of a problem than it is to solve the commutative one. In particular, the speed of convergence of the SDP relaxations may be faster in the non-commutative case than in the commutative one. This is dramatically illustrated on the following example.

Let $p$ be a polynomial of degree 2 and consider the quadratic problem
\begin{equation}\label{quadnc}
\begin{array}{llcll}
\quad p^\star&=&\displaystyle\min_{(H,X,\phi)}& \multicolumn{1}{l}{\langle \phi, p(X) \phi\rangle}\\
&&& X_i^2-X_i=0&\quad i=1,\ldots,n\,,
\end{array}
\end{equation}
where the variables $X_i$ are assumed to be hermitian.
Its first order relaxation is
\begin{equation}\label{relaxquadnc}
\begin{array}{ccllr}
\quad p^{1}&=&\displaystyle\min_{y} &\sum_\alpha
 p_ w  y_ w \\
&&\mathrm{s.t.} & y_1=1\\
&& & M_1(y)\succeq 0
\\ &&& y_{ii}-y_i=0 \quad i=1,\ldots,n\,.
\end{array}
\end{equation}
Any feasible point $y$ of the above SDP problem with objective value $p(y)=\sum_ w  p_ w  y_ w $ defines a feasible point of (\ref{quadnc}) with objective value $\langle \phi, p(X) \phi\rangle=p(y)$, and therefore $p^{1}=p^\star$, i.e., the first order relaxation already yields the global optimum of the original problem.
To see this, perform a Gram decomposition of the matrix $M_1(y)$: $M_1(y)(v,w )=y_{ vw}=\langle\overline{v},\overline{w} \rangle$, where $|v|,|w|\leq 1$, i.e., $v,w\in\{1,x_1,\ldots,x_n\}$. Define the vector $\phi=\overline{1}$, which is normalized since $\langle\overline{1},\overline{1}\rangle=y_1=1$, and the operator $X_i$ ($i=1,\ldots,n$) as the projectors on $\overline{x_i}$. Obviously, $X_i^2=X_i$. Moreover,
$X_i\phi=X_i\overline{x_i}+X_i(\phi-\overline{x_i})=\overline{x_i}$,
where the last equality follows from the fact that
$\langle\overline{x_i},\phi-\overline{x_i}\rangle=y_i-y_{ii}=0$. This implies that
$y_{vw}=\langle \phi,v(X) w(X)\phi\rangle$ for $v,w\in\{1,x_1,\ldots,x_n\}$ and therefore that $p(y)=\langle \phi, p(X) \phi\rangle$ since $p$ is of degree 2. Using similar arguments, one can actually show that the minimization of a polynomial of arbitrary degree evaluated over projection operators can always be determined from the first relaxation of the problem.

The commutative version of (\ref{quadnc}) is the quadratically constrained quadratic program
\begin{equation}\label{quadc}
\begin{array}{llclll}
p^\star &=&\displaystyle\min_{x\in\mathbb{R}^n}&p(x)\\
   & &\text{s.t.}& x_i^2-x_i=0&\qquad i=1,\ldots,n\,.
\end{array}
\end{equation}
Since 0-1 integer programming can be formulated in this form, it is NP-hard to solve a general instance of (\ref{quadc}). Thus, contrary to the non-commutative case, it is highly unlikely that considering relaxations up to some bounded order might be sufficient to solve this problem.

\subsection{Generalization}\label{gen}
In this subsection, we introduce a slight generalization of the problem (\ref{polyncprog2}) to which our method readily extends. We state the results without entering in the details of the proofs.

In addition to the polynomials $p$ and $\{q_i\,:\,i=1,\ldots,m_q\}$ defined in (\ref{polyncprog2}), consider the sets of polynomials $\{r_i\mid i=1,\ldots,m_r\}$ and
$\{s_i\mid i=1,\ldots,m_s\}$, where the $s_i$'s are hermitian. The problem that we consider is
\begin{equation}\label{polyncprog2b}
\mathbf{\tilde P}:\qquad\begin{array}{llcll}
\tilde{p}^{\star}&=&\displaystyle\min_{(H,X,\phi)}& \multicolumn{1}{l}{\langle \phi, p(X) \phi\rangle}\\
&&\text{s.t.}& q_i(X)\succeq
0&\quad i=1,\ldots,m_q\,,\\
&&& r_i(X)\phi=0&\quad i=1,\ldots,m_r\,,\\
&&&\langle \phi, s_i(X)\phi\rangle\geq 0&\quad i=1,\ldots,m_s\,.
\end{array}
\end{equation}
We thus not only require that the operators $X$ satisfy $q_i(X)\succeq 0$ but we also require that $r_i(X)$ acting on $\phi$ yield the null vector and that the average value of $s_i(X)$ be positive. As before we assume that $Q=\{q_i\,:\,i=1,\ldots,m_q\}$ is such that the quadratic module $\mathbf{M}_Q$ is Archimedean.

For $r\in\mathbb{K}[x,x^*]_d$  and
$y=\{y_w \}_{| w |\leq k+d}$, a sequence indexed in $\mathcal{W}_{k+d}$, define the vector $\mathrm{m}_k(r y)$ as the
vector with components indexed in $\mathcal{W}_k$ and whose component $ w $ is equal to
\begin{equation}
\mathrm{m}_k(ry)( w )=L_y(w  r)=\sum_{| v |\leq d} r_ v
y_{ wv }\,.
\end{equation}
If $y$ admits a moment representation (\ref{momrepr}) such that $r(X)\phi=0$, then
$\mathrm{m}_k(ry)=0$, since
\begin{eqnarray}
\mathrm{m}_k(fy)( w )&=&\sum_{ v }r_ v
y_{ w v }=\sum_ v  r_ v  \langle\phi,w(X)
v(X) \phi\rangle=\langle\phi,w(X)  r(X)\phi\rangle=0.
\end{eqnarray}
If in addition $y$ admits a moment representation such that
$\langle\phi,s(X)\phi\rangle\geq 0$, then obviously $\sum_{ w }s_ w
y_ w \geq 0$. These observations motivate the following definition.

For $2k\geq \max \left\{\mathrm{deg}(p),
\mathrm{deg}(q_i),\mathrm{deg}(r_i),\mathrm{deg}(s_i)\right\}]$, we define the relaxation of order $k$ associated to the problem $\mathbf{\tilde P}$ as the SDP problem
\begin{equation}\label{relaxb}
\mathbf{\tilde{R}_k}:\qquad\begin{array}{cclr@{\;}lr}
\tilde{p}^{k}&=&\displaystyle\min_{y} &\multicolumn{2}{c}{\sum_ w  p_ w  y_ w }\\
&&\mathrm{s.t.} & M_k(y)&\succeq 0\\
&& & y_1&=1\\
&& & M_{k-d_i}(q_iy)&\succeq 0& \quad i=1,\ldots,m_q\\
&&& \mathrm{m}_{2k-d'_i}(r_iy)&=0& \quad i=1,\ldots,m_r\\
 && & \sum_ w  s_{i, w } y_ w &\geq 0
 & \quad
i=1,\ldots,m_s\,,
\end{array}
\end{equation}
where $d_i=\lceil\text{deg}(q_i)/2\rceil$, $d'_i=\text{deg}(r_i)$,
and the optimization is over $y\in \mathbb{K}^{|\mathcal{W}_{2k}|}$.
It is easily verified that $\tilde{p}^{k}\geq \tilde{p}^{k)}$ when $k\leq k'$, and that $\tilde{p}^{k}\leq \tilde{p}^\star$ for all $k$.

The results obtained in Subsection~3.2 and 3.3 can easily be adapted to the above situation.
\begin{theorem}
If $\mathbf{M}_Q$ is Archimedean, $\lim_{k\rightarrow\infty} \tilde p^{k}=\tilde p^\star$.
\end{theorem}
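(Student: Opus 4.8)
The plan is to follow, essentially line for line, the primal proof of Theorem~1 from Subsection~\ref{csr}, carrying the two new families of constraints through each step. First I would record the easy inequality: by the moment-representation identities displayed just before the theorem, together with Lemma~\ref{posmom}, every feasible triple $(H,X,\phi)$ of $\mathbf{\tilde P}$ produces through \eqref{momrepr} a feasible point $y$ of $\mathbf{\tilde{R}_k}$ with the same objective value, so $\tilde p^{k}\leq\tilde p^\star$ for all admissible $k$. It therefore suffices to exhibit a feasible solution of $\mathbf{\tilde P}$ whose objective value equals $\hat p:=\lim_{k\to\infty}\tilde p^{k}$.

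Next I would show this limit exists and extract a limiting moment sequence. Every feasible point of $\mathbf{\tilde{R}_k}$ is in particular feasible for the subset of constraints $y_1=1$, $M_k(y)\succeq0$, $M_{k-d_i}(q_iy)\succeq0$ defining $\mathbf{R_k}$ (the new equalities $\mathrm{m}(r_iy)=0$ and inequalities $\sum_w s_{i,w}y_w\geq0$ only shrink the feasible region), so Lemmas~\ref{lempr1} and~\ref{lempr2} apply verbatim and give $|y_w|\leq C^{|w|}$ whenever $y$ is feasible for $\mathbf{\tilde{R}_{k-1+d_\mathbf{M}}}$ and $|w|\leq 2k$. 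As in Lemma~\ref{lempr3}, this makes $(\tilde p^{k})$ nondecreasing and bounded (below once $k$ is large, above by $\tilde p^\star$), so $\hat p$ exists and $\hat p\leq\tilde p^\star$. Then I would repeat the Banach--Alaoglu argument of Lemma~\ref{lempr4}: for each large $k$ take a feasible point of $\mathbf{\tilde{R}_{k-1+d_\mathbf{M}}}$ with objective value $\hat p$ (it exists by convexity, interpolating between $\tilde p^{k-1+d_\mathbf{M}}$ and $\tilde p^\star$), restrict to $|w|\leq 2k$, rescale by $C^{-|w|}$, and pass to a weak-$\ast$-convergent subsequence. The pointwise limit $\hat y$ satisfies $|\hat y_w|\leq C^{|w|}$, $\hat y_1=1$, $\sum_w p_w\hat y_w=\hat p$, and $M_k(\hat y)\succeq0$, $M_{k-d_i}(q_i\hat y)\succeq0$ for every $k$; moreover, since for each fixed word $w$ the linear equation $\mathrm{m}(r_i\hat y)(w)=\sum_v r_{i,v}\hat y_{wv}=0$ and the linear inequality $\sum_w s_{i,w}\hat y_w\geq0$ involve only finitely many components and hold along the subsequence once $k$ is large, they pass to the limit, yielding $L_{\hat y}(wr_i)=0$ for all words $w$ and $L_{\hat y}(s_i)\geq0$.

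I would then run the Gelfand--Naimark--Segal construction of the proof of Theorem~1 on this $\hat y$: positivity of the $M_k(\hat y)$ makes $\langle p,q\rangle:=L_{\hat y}(p^*q)$ a semi-inner product, the Archimedean relation (via $M_k(c\hat y)\succeq0$) shows $I=\{p:\langle p,p\rangle=0\}$ is a left ideal, left multiplication by $x_i$ descends to bounded operators $\hat X_i$ on the completion $\hat H$ of $\mathbb{K}[x,x^*]/I$ with $\hat X_i^*=\hat X_{i+n}$, and with $\hat\phi:=1+I$ one gets $\langle\hat\phi,\hat\phi\rangle=\hat y_1=1$, $\langle\hat\phi,p(\hat X)\hat\phi\rangle=\sum_w p_w\hat y_w=\hat p$, and $q_i(\hat X)\succeq0$ from $M_{k-d_i}(q_i\hat y)\succeq0$. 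The two new requirements of $\mathbf{\tilde P}$ are exactly the new limiting relations in disguise: $r_i(\hat X)\hat\phi=r_i+I$ and, for every $p$, $\langle p,r_i(\hat X)\hat\phi\rangle=L_{\hat y}(p^*r_i)=\sum_v p^*_v L_{\hat y}(v^*r_i)=0$, so $r_i(\hat X)\hat\phi$ is orthogonal to the dense subspace $\mathbb{K}[x,x^*]/I$ and hence vanishes; while $\langle\hat\phi,s_i(\hat X)\hat\phi\rangle=L_{\hat y}(s_i)\geq0$. Thus $(\hat H,\hat X,\hat\phi)$ is feasible for $\mathbf{\tilde P}$ with objective value $\hat p$, giving $\tilde p^\star\leq\hat p$, which together with $\hat p\leq\tilde p^\star$ yields $\hat p=\tilde p^\star$ and hence $\lim_k\tilde p^{k}=\tilde p^\star$; incidentally this shows the infimum in \eqref{polyncprog2b} is attained.

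I do not expect an essential new difficulty: the point is that the two extra constraints restrict only the cyclic vector $\hat\phi$, not the operators $\hat X_i$, so they slot cleanly into the GNS data. The one place that needs care is the index bookkeeping in the limiting step --- one must verify that, for every single word $w$, the relaxation $\mathbf{\tilde{R}_{k-1+d_\mathbf{M}}}$ already imposes $\mathrm{m}(r_iy)(w)=0$ once $k$ is large enough, so that $L_{\hat y}(wr_i)=0$ holds for all $w$ and $\hat\phi$ genuinely lies in $\ker r_i(\hat X)$. I would also remark that, unlike for Theorem~1, there is no clean dual/Positivstellensatz reformulation here, since $r_i(X)\phi=0$ and $\langle\phi,s_i(X)\phi\rangle\geq0$ are conditions on the state rather than operator-positivity constraints of the form $q_i(X)\succeq0$; the primal GNS route is the natural one.
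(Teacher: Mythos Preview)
Your proposal is correct and follows essentially the same approach as the paper, which merely sketches that ``the proof \ldots\ follows along the same line as the proof of Theorem~1'' and that the additional properties $r_i(\hat X)\hat\phi=0$ and $\langle\hat\phi,s_i(\hat X)\hat\phi\rangle\geq0$ come from the constraints $\mathrm{m}_{2k-d'_i}(r_iy)=0$ and $\sum_w s_{i,w}y_w\geq0$; you have simply supplied the details that the paper leaves implicit. One minor inaccuracy in your closing remark: the paper does write down a dual SOS-type formulation \eqref{dual2gen} for $\mathbf{\tilde R_k}$, though it does not use it to reprove convergence, so your instinct that the primal GNS route is the natural one here is right.
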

\begin{theorem}
Assume that the optimal solution $y^{k}$ of the relaxation $\mathbf{\tilde{R}_k}$ of
order $k$ satisfies
\be
\mathrm{rank}\,M_k(y^{k})=\mathrm{rank}\,M_{k-d}(y^{k}),
\label{rankloopgen}\ee where
$d=\max_i d_i\geq 1$, and
\be\label{condd}
d'_i-d\leq k
\ee
for all $i=1,\ldots,m_r$. Then
$\tilde{p}^{k}=\tilde{p}^*$, i.e., the optimum of the relaxation of order $k$ is
the global optimum of the original problem $\mathbf{\tilde P}$.
Moreover, there exists a global optimizer
$(H^\star,X^\star,\phi^\star)$ of $\mathbf{\tilde P}$ with
$\dim{H^\star}= \mathrm{rank}\,M_{k-d}(y^{k})$.
\end{theorem}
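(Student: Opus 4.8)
The plan is to mimic the proof of Theorem~2 essentially line by line and then add two short verifications for the extra constraints. Let $y^k$ be an optimal solution of $\mathbf{\tilde R_k}$ and put $r=\mathrm{rank}\,M_k(y^k)=\mathrm{rank}\,M_{k-d}(y^k)$. As in the proof of Theorem~2, take a Gram decomposition $M_k(y^k)(v,w)=y^k_{v^*w}=\langle\overline v,\overline w\rangle$ with $\overline w\in\mathbb K^r$; set $H^\star=\mathrm{span}\{\overline w\mid|w|\le k\}$, which by the rank loop~(\ref{rankloopgen}) also equals $\mathrm{span}\{\overline w\mid|w|\le k-d\}$, so $\dim H^\star=r$; define $X^\star_i\overline w=\overline{x_iw}$ for $|w|\le k-1$ (well defined on all of $H^\star$ since $d\ge1$, exactly as in Theorem~2) and $\phi^\star=\overline1$. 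The arguments of Theorem~2 carry over unchanged to show that $(X^\star_i)^*=X^\star_{i+n}$, that $\langle\phi^\star,w(X^\star)\phi^\star\rangle=y^k_w$ for all $|w|\le 2k$, that $\langle\phi^\star,p(X^\star)\phi^\star\rangle=\tilde p^k$, and that $q_i(X^\star)\succeq0$ for $i=1,\dots,m_q$ (the relevant submatrix of $M_{k-d_i}(q_iy^k)\succeq0$ again generates the full constraint via the span identity~(\ref{rankloopgen})). What is left is to check $r_i(X^\star)\phi^\star=0$ and $\langle\phi^\star,s_i(X^\star)\phi^\star\rangle\ge0$; given these, $(H^\star,X^\star,\phi^\star)$ is feasible for $\mathbf{\tilde P}$ with value $\tilde p^k$, whence $\tilde p^k\ge\tilde p^\star$, and with $\tilde p^k\le\tilde p^\star$ we conclude $\tilde p^k=\tilde p^\star$; the dimension statement is built into the construction.

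The $s_i$ constraint is immediate: since $\deg s_i\le 2k$ and $\langle\phi^\star,w(X^\star)\phi^\star\rangle=y^k_w$ on $|w|\le 2k$, we get $\langle\phi^\star,s_i(X^\star)\phi^\star\rangle=\sum_w s_{i,w}y^k_w\ge0$, which is one of the feasibility constraints of $\mathbf{\tilde R_k}$.

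For the $r_i$ constraint, it suffices to show $\langle\overline w,r_i(X^\star)\overline1\rangle=0$ for every $|w|\le k-d$, since these vectors span $H^\star$. Write $r_i=\sum_{|v|\le d'_i}r_{i,v}v$, fix such a $w$ and $v$, and split $v=v'v''$ with $|v'|=\min(|v|,k-|w|)$. Then $|v'|+|w|\le k$ and $|v''|=\max\{0,\,|v|-(k-|w|)\}\le\max\{0,\,d'_i-k+|w|\}\le d'_i-d\le k$, where I used $|w|\le k-d$ and hypothesis~(\ref{condd}) (with $|v''|\le k$ trivial when $|v|\le k-|w|$). Hence $v''(X^\star)\overline1=\overline{v''}$, and using $(v'(X^\star))^*=(v')^*(X^\star)$ together with $(v')^*(X^\star)\overline w=\overline{(v')^*w}$ I get
\[
\langle\overline w,v(X^\star)\overline1\rangle=\big\langle (v'(X^\star))^*\overline w,\,\overline{v''}\big\rangle=\langle\overline{(v')^*w},\overline{v''}\rangle=y^k_{w^*v'v''}=y^k_{w^*v},
\]
the entry on the right being defined because $|w^*v|\le(k-d)+d'_i\le2k$ by~(\ref{condd}). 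Summing over $v$ gives $\langle\overline w,r_i(X^\star)\overline1\rangle=\sum_v r_{i,v}\,y^k_{w^*v}=\mathrm{m}_{2k-d'_i}(r_iy^k)(w^*)$, which vanishes because $\mathbf{\tilde R_k}$ enforces $\mathrm{m}_{2k-d'_i}(r_iy^k)=0$ and $|w^*|=|w|\le k-d\le 2k-d'_i$. So $r_i(X^\star)\phi^\star$ is orthogonal to a spanning set of $H^\star$, hence zero.

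I expect the only genuinely new point beyond Theorem~2 to be this degree bookkeeping for the $r_i$: the factorization $v=v'v''$ that rewrites $\langle\overline w,v(X^\star)\overline1\rangle$ as an entry of $y^k$ is available exactly when $d'_i-d\le k$, i.e.\ under condition~(\ref{condd}), and the same inequality puts the index $w^*$ in the range where the constraint $\mathrm{m}_{2k-d'_i}(r_iy^k)=0$ applies; everything else is a transcription of the proof of Theorem~2. As in the remark following that theorem, the construction also yields an NC flat-extension statement: $\hat y_w:=\langle\phi^\star,w(X^\star)\phi^\star\rangle$ extends $y^k$ to an infinite sequence that is feasible for every $\mathbf{\tilde R_{k'}}$ with $k'\ge k$.
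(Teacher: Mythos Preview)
Your proof is correct and follows exactly the route sketched in the paper: reproduce the construction of Theorem~2 verbatim and then verify the two extra constraints, with the $r_i$ check hinging on $|w|\le k-d\le 2k-d'_i$ so that $\langle\overline w,r_i(X^\star)\overline1\rangle$ is an entry of $\mathrm m_{2k-d'_i}(r_iy^k)=0$. Your explicit factorisation $v=v'v''$ is a bit more than needed---once you have already invoked $\langle\phi^\star,u(X^\star)\phi^\star\rangle=y^k_u$ for all $|u|\le 2k$, the identity $\langle\overline w,v(X^\star)\overline1\rangle=y^k_{w^*v}$ follows directly from $|w^*v|\le(k-d)+d'_i\le 2k$---but it is certainly not wrong, just a reproof of that identity in the particular case at hand.
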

The proof of both these theorems follow along the same line as the proofs of Theorem~1 and Theorem~2, respectively. One has simply to show that the reconstructed operators $\hat X$ and the state $\hat \phi$ satisfy the additional properties $r_i(\hat X)\hat\phi=0$ and $\langle \hat\phi, s_i(\hat X)\hat \phi\rangle\geq 0$. This can be established given the conditions $\mathrm{m}_{2k-d'_i}(r_iy)=0$ and $\sum_w
 s_{i,w
 } y_ w \geq 0$ present in $\mathbf{\tilde{R}_k}$. The additional constraint (\ref{condd}) with respect to Theorem~2 comes from the fact that to show that $r_i(X)\phi=0$, we need to show, because of
(\ref{vectorial}), that $\langle\overline{w},r_i(X)\phi\rangle=0$
for all $| w |\leq k-d$. This is implied by $\mathrm{m}_{2k-d'_i}(r_iy)=0$ when $2k-d'_i\geq k-d$, i.e., when (\ref{condd}) is satisfied.

The duals of the relaxations $\mathbf{\tilde{R}_k}$ can be shown to be equivalent to the problems
\begin{equation}\label{dual2gen}
\begin{array}{cccl}
\tilde{\lambda}^{k}&=&\displaystyle\max_{\lambda,b_i,c_{ij},f_i,g_i} &\multicolumn{1}{l}{\lambda}\\
&&\mathrm{s.t.}&p-\lambda= \sum_j
b^*_jb_j+\sum_{i=1}^{m_q}\sum_j
c^*_{ij}q_ic_{ij}\\
&&&\qquad\qquad+\sum_{i=1}^{m_r}\left(f_ir_i+r^*_if^*_i\right)+\sum_{i=1}^{m_s}g_is_i\\
&&& \max_{j}\text{deg}(b_j)\leq k,\\
&&&\max_{j}\text{deg}(c_{ij})\leq k-d_i,\\
&&&\text{deg}(f_{i})\leq 2k-d'_i,\\
&&&g_i\geq 0\,,
\end{array}
\end{equation}
where $b_j$, $c_{ij}$, $f_i$ are polynomials and $g_i$ are real numbers. From the decomposition of $p-\tilde{\lambda}^{k}$ appearing in (\ref{dual2gen}), it clearly follows that $p(X)-\tilde{\lambda}^{k}\geq 0$ for any $(H,X,\phi)$ satisfying the constraints in $\mathbf{\tilde P}$. Thus the solution of the dual (\ref{dual2gen}) provides a certificate that the optimal solution $\tilde{p}^\star$ of $\mathbf{\tilde P}$ cannot be lower than $\tilde{\lambda}^{k}$.

Finally, we mention that it is possible, taking inspiration from
\cite{henrion}, to generalize the problem (\ref{polyncprog2}) and the results associated to it to the case of matrix-valued polynomials, that is, polynomials $\sum_w
 p_w
  w $, where each coefficient $p_ w $ is now an
$a\times b$ matrix with entries from $\mathbb{K}$. A Positivstellensatz also exists in this case \cite{heltonmc}.

\section{Illustration of the method}
For the sake of illustration, we now apply our approach
on simple examples. To simplify the notation, through all this section we label monomials (i.e. words) by the indices of the ordered non-commutative variables of which they are composed. For instance, the word $w=x_2x_1x_2x_2$ will be referred to as $2122$. The empty word $1$ corresponding to the identity element of the algebra will be labeled by the symbol $\emptyset$.

Our first example involves two hermitian variables
$X_1=X_1^*$ and $X_2=X_2^*$ and has the form
\begin{equation}\label{illustr}
\begin{array}{llcl}
p^\star&=&\displaystyle\min_{(H,X,\phi)}& \multicolumn{1}{l}{\langle \phi, X_1X_2+X_2X_1 \phi\rangle}\\
&&\text{s.t.}& X_1^2-X_1=0\\
&&& -X_2^2+X_2+1/2\succeq 0\,.\\
\end{array}
\end{equation}
Since all constraint and objective variables are at most of degree
2, the first order relaxations is associated with the monomial basis
$\mathcal{W}_2=\{1,x_1,x_2,x_1x_2,x_2x_1,x_2^2\}$, where, following the approach of Subsection~3.5, we used that $x_1^2=x_1$. The first relaxation step thus involves the relaxed variables
$\{y_\emptyset,y_1,y_2,y_{12},y_{21},y_{22}\}$ and corresponds to the SDP problem
\begin{equation}\label{illustrrel1}
\begin{array}{ccll}
p^{1}&=&\displaystyle\min_{y} &\multicolumn{1}{l}{y_{12}+y_{21}}\\
&&\mathrm{s.t.}  &\left[
\begin{array}{c|cc}
1 &y_1& y_2\\
\hline y_1& y_1& y_{12}\\
y_2 &y_{21}& y_{22}
\end{array}\right]\succeq 0\\
&& & -y_{22}+y_2+1/2\geq 0\,.\\
\end{array}
\end{equation}
We solved this SDP problem using the Matlab toolboxes YALMIP
\cite{yalmip} and SeDuMi \cite{sedumi}. After rounding, we obtain
the solution $p^{1}=-3/4$, achieved for the moment matrix
\be \label{m1}M_1=\left[\begin{array}{c|cc}
1 & 3/4 & -1/4\\
\hline 3/4 & 3/4 & -3/8\\
-1/4 & -3/8&1/4\end{array}\right]\,,
\ee
with eigenvalues $0$, $1\pm\sqrt{37}/8$. The second order relaxation
is
\begin{equation}\label{illustrrel2}
\begin{array}{ccll}
p^{2}&=&\displaystyle\min_{y} &\multicolumn{1}{l}{y_{12}+y_{21}}\\
&&\mathrm{s.t.}  &\left[
\begin{array}{c|cc|ccc}
1 &y_1& y_2 &y_{12}&y_{21}&y_{22}\\
\hline y_1& y_1& y_{12}&y_{12}&y_{121}&y_{122}\\
y_2 &y_{21}& y_{22}&y_{212}&y_{221}&y_{222}\\
\hline y_{21} &y_{21}& y_{212}&y_{212}&y_{2121}&y_{2122}\\
y_{12} &y_{121}& y_{122}&y_{1212}&y_{1221}&y_{1222}\\
y_{22} &y_{221}& y_{222}&y_{2212}&y_{2221}&y_{2222}\\
\end{array}\right]\succeq 0\\
&& &\left[\begin{array}{c|cc}
-y_{22}+y_2+\frac{1}{2} &-y_{221}+y_{21}+\frac{1}{2}y_{1}&-y_{222}+y_{22}+\frac{1}{2}y_{2} \\
\hline -y_{221}+y_{21}+\frac{1}{2}y_{1}& -y_{1221}+y_{121}+\frac{1}{2}y_{1}& -y_{1222}+y_{122}+\frac{1}{2}y_{12}\\
-y_{222}+y_{22}+\frac{1}{2}y_{2}
&-y_{1222}+y_{122}+\frac{1}{2}y_{12}&
-y_{2222}+y_{222}+\frac{1}{2}y_{22}
\end{array}\right]\succeq 0\,,\end{array}
\end{equation}
with solution $p^{2}=-3/4$. The moment matrix associated to this
solution is
\be M_2=\left[\begin{array}{c|cc|ccc}
1 & 3/4 & -1/4 &-3/8&-3/8&1/4\\
\hline 3/4 & 3/4 & -3/8 &-3/8&-3/16&0\\
-1/4 & -3/8 & 1/4
&3/16&0&1/8\\
\hline -3/8 & -3/8 & 3/16 &3/16&3/32&0\\
-3/8 & -3/16& 0 &3/32&3/16&-3/16\\
1/4&0&1/8&0&-3/16&1/4
\end{array}\right]\,,
\ee
which as two non-zero eigenvalues $3/32\times\left(14\pm\sqrt{61}\right)$.

\paragraph{Optimality criterion and extraction of optimizers.}
Since the matrix $M_2$ has two non-zero eigenvalues, it has rank $2$.
Let $M_1(y^{2})$ be the upper-right $3\times 3$ submatrix of $M_2=M_2(y^{2})$. This submatrix is, in fact, equal to (\ref{m1}) and has thus also rank $2$. The matrices $M_1(y^{2})$ and $M_2(y^{2})$ have thus the same rank and the condition (\ref{rankloop}) of Theorem~2 is satisfied. It follows that
$p^\star=p^{2}=-3/4$. It also follows that we can extract a global optimizer for (\ref{illustrrel1}), which will be realized in a space of dimension 2.  For this, write down the Gram
decomposition $M_2=R^TR$, where
\be
R=\left[\begin{array}{cccccc} 1 & 3/4 & -1/4 &-3/8 &
-3/8& 1/4\\
0& \sqrt{3}/4 & -\sqrt{3}/4 & -\sqrt{3}/8 &\sqrt{3}/8 & -\sqrt{3}/4
\end{array}\right]\,.
\ee
Following the procedure specified in the proof of Theorem~2, we
find the optimal solutions
\be\label{solillustr}
X_1^\star=\left[\begin{array}{cc} 3/4 & \sqrt{3}/4\\
\sqrt{3}/4&1/4 \end{array}\right]\,,\qquad
X_2^\star=\left[\begin{array}{cc} -1/4 & -\sqrt{3}/4\\
-\sqrt{3}/4&5/4 \end{array}\right]\,,\quad
\phi^\star=\left[\begin{array}{c}1\\0\end{array}\right]\,.
\ee

\paragraph{Dual.}
Solving the dual of the order 1 relaxation (\ref{illustrrel1}) yields, in the notation of Appendix~B,  the solutions
\begin{eqnarray}
\lambda &=& -3/4\nonumber\\
V&=&\left[\begin{array}{ccc}
1/4 & -1/2& -1/2\\
-1/2& 1 & 1 \\
-1/2& 1& 1
\end{array}\right]\nonumber\\
W&=&1\,.
\end{eqnarray}
The matrix $V$ has only one non-zero eigenvalue and can be written as $V=aa^T$ where $a=[-1/2,1,1]$. In the formlation of (\ref{dual2}), this corresponds to an SOS decomposition for $x_1x_2+x_2x_1$ of the form
\be\label{sos}
x_1x_2+x_2x_1-\left(-\frac{3}{4}\right)=\left(-\frac{1}{2}+x_1+x_2\right)^2+\left(-x_2^2+x_2+\frac{1}{2}\right)\,.
\ee
It immediately follows that
$\langle\phi,X_1X_2+X_2X_1\phi\rangle\geq -3/4$ for every
$(H,X,\phi)$ satisfying $X_1^2=X_1$ and $-X_2^2+X_2+\frac{1}{2}\succeq 0$. Thus the
decomposition (\ref{sos}) provides a certificate that the solution
(\ref{solillustr}) is optimal.

\paragraph{Comparison with the commutative case.}
To illustrate the differences and similarities between the
non-commutative and commutative case, let
\begin{equation}\label{illustr2comm}
\begin{array}{llcl}
p^\star&=&\displaystyle\min_{x\in\mathbb{R}^2}& \multicolumn{1}{l}{2x_1x_2}\\
&&\text{s.t.}& x_1^2-x_1=0\\
&&& -x_2^2+x_2+1/2\geq 0
\end{array}
\end{equation}
be the commutative version of (\ref{illustr}).
The first relaxation step associated to this problem involves  the monomial basis
$\mathcal{W}^c_2=\{1,x_1,x_2,x_1x_2,x_2^2\}$ (we used $x_1x_2=x_2x_1$) and the corresponding
relaxation variables
$\{y_\emptyset,y_1,y_2,y_{12},y_{22}\}$,. This should be compared to
$\mathcal{W}_2=\{1,x_1,x_2,x_1x_2,x_2x_1x_2^2\}$ and
$\{y_\emptyset,y_1,y_2,y_{12},y_{21},y_{22}\}$ in the
non-commutative case. The first order relaxation associated to (\ref{illustr2comm}) is thus
\begin{equation}\label{illustrrel1comm}
\begin{array}{ccll}
p^{1}&=&\displaystyle\min_{y} &\multicolumn{1}{l}{2\,y_{12}}\\
&&\mathrm{s.t.}  &\left[
\begin{array}{c|cc}
1 &y_1& y_2\\
\hline y_1& y_1& y_{12}\\
y_2 &y_{12}& y_{22}
\end{array}\right]\succeq 0\\
&& & -y_{22}+y_2+1/2\geq 0\,.\\
\end{array}
\end{equation}
Note that (\ref{illustrrel1}) and (\ref{illustrrel1comm}) are in
fact identical, because the hermicity of the moment matrix in
(\ref{illustrrel1}) implies that $y_{12}=y_{21}$. In general, it
always happen that the first order relaxations of the commutative and non-commutative version of a problem coincide. We thus
find as before that $p^{1}=-3/4$. The relaxation of order two of
(\ref{illustrrel1comm}), however, is
\begin{equation}\label{illustrrel2comm}
\begin{array}{ccll}
p^{2}&=&\displaystyle\min_{y} &\multicolumn{1}{l}{2y_{12}}\\
&&\mathrm{s.t.}  &\left[
\begin{array}{c|cc|cc}
1 &y_1& y_2 &y_{12}&y_{22}\\
\hline y_1& y_1& y_{12}&y_{12}&y_{122}\\
y_2 &y_{12}& y_{22}&y_{122}&y_{222}\\
\hline y_{12} &y_{12}& y_{122}&y_{122}&y_{1222}\\
y_{22} &y_{122}& y_{222}&y_{1222}&y_{2222}\\
\end{array}\right]\succeq 0\\
&& &\left[\begin{array}{c|cc}
-y_{22}+y_2+\frac{1}{2} &-y_{122}+y_{12}+\frac{1}{2}y_{1}&-y_{222}+y_{22}+\frac{1}{2}y_{2} \\
\hline -y_{122}+y_{12}+\frac{1}{2}y_{1}& -y_{122}+y_{12}+\frac{1}{2}y_{1}& -y_{1222}+y_{122}+\frac{1}{2}y_{12}\\
-y_{222}+y_{22}+\frac{1}{2}y_{2} &-y_{1222}+y_{122}+\frac{1}{2}y_{12}&
-y_{2222}+y_{222}+\frac{1}{2}y_{22}
\end{array}\right]\succeq 0\,.
\end{array}
\end{equation}
Solving it, we obtain $p^{2}=1-\sqrt{3}\simeq -0.7321$. Again, it can be verified that the
rank condition (\ref{rankloop}) of Theorem~2 is satisfied, so that this solution is optimal, and the following optimizer can be reconstructed:
\be
x_1^\star=1,\qquad x_2^\star=(1-\sqrt{3})/2\,.
\ee
As expected, the global minimum of (\ref{illustr2comm}) is higher
than the one of (\ref{illustr}) as the commutative case is more
constrained than the non-commutative one.

\paragraph{Additional constraints.} We now consider a problem of the form (\ref{polyncprog2b}) by adding two constraints to (\ref{illustr}):
{\allowdisplaybreaks
\begin{equation}\label{illustrgen}
\begin{array}{llcl}
p^\star&=&\displaystyle\min_{(H,X,\phi)}& \multicolumn{1}{l}{\langle \phi, X_1X_2+X_2X_1 \phi\rangle}\\
&&\text{s.t.}& X_1^2-X_1=0\\
&&& -X_2^2+X_2+1/2\succeq 0\\
&&& (3X_1+2X_2-1)\,\phi = 0\\
 &&&-\langle \phi, X_1\phi\rangle+1/3\geq 0\,.
\end{array}
\end{equation}}
Following (\ref{relaxb}), the corresponding first order relaxation is
{\allowdisplaybreaks\begin{equation}\label{illustrrel1gen}
\begin{array}{ccll}
p^{1}&=&\displaystyle\min_{y} &\multicolumn{1}{l}{y_{12}+y_{21}}\\
&&\mathrm{s.t.}  &\left[
\begin{array}{c|cc}
1 &y_1& y_2\\
\hline y_1& y_1& y_{12}\\
y_2 &y_{21}& y_{22}
\end{array}\right]\succeq 0\\
&& & -y_{22}+y_2+1/2\geq 0\\
& & & 3y_{\alpha
}+2y_{ v }-y_{ u }=0\qquad ( w , v , u )\in J_1\\
&& & -y_1+1/3\geq 0\,,
\end{array}
\end{equation}}
where $J_1=\{(1,2,\emptyset),(1,12,1),(2,22,2)\}$.
This problem admits the solution $p^{1}=-2/3$, achieved for the moment matrix
\be M_1=\left[\begin{array}{c|cc}
1 & 1/3 & 0\\
\hline 1/3 & 1/3 & -1/3\\
0 & -1/3 & 1/2\end{array}\right]\,,
\ee
with eigenvalues $0$, $2/3$, and $7/6$. The solution $p^{1}=-2/3$ thus yields a lower-bound on $p^\star$, which is already higher, as expected, than the optimal solution of (\ref{illustr}).  The second order relaxation
is
{\allowdisplaybreaks
\begin{align}\label{illustrrel2gen}
p^{2}=&\displaystyle\min_{y}\; {y_{12}+y_{21}}\nonumber\\
&\mathrm{s.t.}\;  \left[
\begin{array}{c|cc|ccc}
1 &y_1& y_2 &y_{12}&y_{21}&y_{22}\\
\hline y_1& y_1& y_{12}&y_{12}&y_{121}&y_{122}\\
y_2 &y_{21}& y_{22}&y_{212}&y_{221}&y_{222}\\
\hline y_{21} &y_{21}& y_{212}&y_{212}&y_{2121}&y_{2122}\\
y_{12} &y_{121}& y_{122}&y_{1212}&y_{1221}&y_{1222}\\
y_{22} &y_{221}& y_{222}&y_{2212}&y_{2221}&y_{2222}\\
\end{array}\right]\succeq 0\\
&\phantom{\mathrm{s.t.}}\;\left[\begin{array}{c|cc}
-y_{22}+y_2+\frac{1}{2} &-y_{221}+y_{21}+\frac{1}{2}y_{1}&-y_{222}+y_{22}+\frac{1}{2}y_{2} \\
\hline -y_{221}+y_{21}+\frac{1}{2}y_{1}& -y_{1221}+y_{121}+\frac{1}{2}y_{1}& -y_{1222}+y_{122}+\frac{1}{2}y_{12}\\
-y_{222}+y_{22}+\frac{1}{2}y_{2}
&-y_{1222}+y_{122}+\frac{1}{2}y_{12}&
-y_{2222}+y_{222}+\frac{1}{2}y_{22}
\end{array}\right]\succeq 0\nonumber\\
&\phantom{\mathrm{s.t.}}\; 3y_{ w }+2y_{ v }-y_{ u }=0\qquad ( w , v , u )\in J_2\nonumber\\
 &\phantom{\mathrm{s.t.}}\; -y_1+1/3\geq 0\nonumber\,,
\end{align}}
where $J_2=\{(1,2,\emptyset),(1,12,1),(21,22,2),(121,122,12),(21,22,21),(221,222,22),\linebreak[4](121,1212,121),(1221,1222,122),(2121,2122,212),(221,2212,221),(2221,2222,222)\}$. It admits the solution $p^{2}=-2/3$ with
\be M_2=\left[\begin{array}{c|cc|ccc}
1 & 1/3 & 0 &-1/3&-1/3&1/2\\
\hline 1/3 & 1/3 & -1/3 &-1/3&0&-1/6\\
0 & -1/3 & 1/2
&1/3&-1/6&1/2\\
\hline -1/3 & -1/3 & 1/3 &1/3&0&1/6\\
-1/3 & 0 & -1/6 &-0&1/6&-1/3\\
1/2&-1/6&1/2&1/6&-1/3&3/4
\end{array}\right]\,,
\ee
which as two non-zero eigenvalues $17/12$ and $5/3$. As in the previous examples, it is easily verified that the rank
condition (\ref{rankloop}) is satisfied, and we thus deduce that
$p^\star=p^{2}=-2/3$. From the Gram
decomposition $M_2=R^TR$, with
\be
R=\left[\begin{array}{cccccc} 1 & 1/3 & 0 &-1/3 &
-1/3& 1/2\\
0& \sqrt{2}/3 & -\sqrt{2}/2 & -\sqrt{2}/3 &\sqrt{2}/6 & -\sqrt{2}/2
\end{array}\right]\,,
\ee
one obtains the global optimizer
\be\label{solillustr2}
X_1^\star=\left[\begin{array}{cc} 1/3 & \sqrt{2}/3\\
\sqrt{2}/3&2/3 \end{array}\right]\,,\qquad
X_2^\star=\left[\begin{array}{cc} 0 & -\sqrt{2}/2\\
-\sqrt{2}/2&1 \end{array}\right]\,,\quad
\phi=\left[\begin{array}{c}1\\0\end{array}\right]\,.
\ee
Finally, the dual of the first order relaxation (\ref{illustrrel1gen}) yields the SOS decomposition:
\begin{eqnarray}\label{sos2}
x_1x_2+x_2x_1-\left(-\frac{2}{3}\right)&=&\frac{1}{9}\left(-1+3x_1+2x_2\right)^2+\frac{4}{9}\left(-x_2^2+x_2+\frac{1}{2}\right)+\left(\frac{1}{3}-x_1\right)\nonumber\\
&&+\frac{1}{6}x_1\left(3x_1+2x_2-1\right)+\frac{1}{6}\left(3x_1+2x_2-1\right)x_1\,,
\end{eqnarray}
which clearly implies $p^\star\geq -2/3$.

\section{Applications}
The results presented so far have immediate applications in quantum theory and quantum information science.
Since the dimension of the underlying Hilbert space is not specified in the optimization problem (\ref{polyncprog2}) or (\ref{polyncprog2b}), they are well adapted to situations where we want to optimize a quantity over all its possible physical realizations, that is to say, over Hilbert spaces of arbitrary dimension. Computing the maximal quantum violation of a Bell inequality is an example of this sort.

Let $S_1,\ldots,S_N$ be a collection of finite disjoint sets. Each $S_k$ represents a measurement that can be performed on a given system and the elements $i\in S_k$ are the possible outcomes of the $k$-measurement. We suppose that the system is composed of two non-interacting subsystems, and that measurements $S_1,\ldots,S_n$ are performed on the first system and measurements $S_{n+1},\ldots,S_N$ on the second. We put $A=S_1\cup\ldots\cup S_n$, $B=S_{n+1}\cup\ldots\cup S_N$, and denote by $P(ij)$ the joint probability to obtain outcome $i\in A$ and outcome $j\in B$ when measurements associated to these outcomes are made on the first and second subsystems, respectively. In quantum theory, these probabilities are given by $P(ij)=\langle\phi,E_iE_j\phi\rangle$, where $\phi$ describes the state of the system under observation and the self-adjoint operators $E_i$ describe the measurements performed on $\phi$. The measurement operators $\{E_i\,:\,i\in S_k\}$ associated to the measurement $S_k$ form an orthogonal resolution of the identity, and operators corresponding to different subsystems commute, i.e., $[E_i,E_j]=0$ when $i\in A$ and $j\in B$.

For our purposes, a Bell inequality is simply a linear expression $\sum_{ij}c_{ij}P(ij)$ in the joint probabilities. We are interested in the maximal value that this quantity can take over all probabilities $P(ij)$ that admit a quantum representation. This amounts to solve the problem
\begin{align}\label{bell}
\displaystyle\min_{(H,E,\phi)}\quad& \langle\phi,\sum_{ij}c_{ij}E_iE_j\phi\rangle\nonumber\\
\text{s.t.}\quad& E_iE_j=\delta_{ij}E_i\quad \forall S_k \text{ and } \forall i,j\in S_k\nonumber\\
& \sum_{i\in S_k} E_i=1 \qquad\forall S_k\\
&[E_i,E_j]=0 \qquad\forall i\in A \text{ and } \forall j\in B\nonumber\,,
\end{align}
which is a particular instance of the non-commutative optimization problem (\ref{polyncprog2}) and involves polynomials of degree at most 2. Note that $1-\sum_{i\in\mathcal{S}_k}E^2_i=(1-\sum_{i\in \mathcal{S}_k} E_i)+\sum_{i\in\mathcal{S}_k}(E_i-E^2_i)=0$, and thus the quadratic module associated to the constraints in (\ref{bell}) is Archimedean.  The sequence of SDP relaxations associated to (\ref{bell}) thus converges to the optimal solution. This particular sequence of SDP relaxations is the one already introduced in \cite{mpa,mpa2} and the source of inspiration for the present work. It represents the unique tool that is currently available to compute the maximal violation of a generic Bell inequality. It has been applied up to the third order in \cite{vertesi} to derive upper-bounds on the maximal violation of 241 Bell inequalities. The resulting upper-bounds are tight for all but 20 of these inequalities; for the remaining 20 inequalities the gap between the upper bound and the best known lower bound is small.

The sequence of SDP relaxations introduced here can also be used to decide if a given set of probabilities $P(ij)$ admits a quantum representation \cite{mpa,mpa2}. More generally, it has the potential to find other applications in quantum information science, see for instance \cite{doherty,ito,randomness}.

Besides applications where the dimension of the underlying Hilbert space is not fixed, the optimization problems (\ref{polyncprog2}) and (\ref{polyncprog2b}) are also well suited to problems where the Hilbert space is the unique irreducible representation space of a set of operators satisfying algebraic constraints. Consider, for instance a system of $N$ electrons that can occupy $M$ orbitals, each orbital being associated with annihilation and creation operators $a_i$ and $a^\dagger_i$, $i=1,\dots,M$ (we use the common physics notation $\dagger$ for the conjugate transpose). Since electrons interact pairwise, the hamiltonian for such a system involve only two-body interactions and its ground state energy can be computed as
{\allowdisplaybreaks\begin{align}\label{qc}
\min\quad& \langle\phi,\,\sum_{ijkl}h_{ijkl}a_i^\dagger a_j^\dagger a_k a_l\,\phi\rangle\nonumber\\
\text{s.t.}\quad& \{a_i,a_j\}=0\nonumber\\
& \{a_i^\dagger,a_j^\dagger\}=0\\
& \{a_i^\dagger,a_j\}=\delta_{ij}\nonumber\\
&\left(\sum_{i} a^\dagger_i a_i - N\right)\phi=0\,.\nonumber
\end{align}}
The first three constraints represent the usual anticommutation fermionic relations, while the last constraint fixes the number of electrons to $N$. This problem is a particular case of (\ref{polyncprog2b}) and it involves polynomials of degree 4. Note that the algebra of operators generated by (\ref{qc}) has a unique irreducible representation of dimension $2^M$. Since a product (in normal order) of more than $N$ of the operators $\{a_i,a^\dagger_i\}$ vanishes, the sequence of SDP relaxations halts at order $N$, and thus $p^{N}=p^\star$.

The hierarchy of SDP relaxations associated to the problem (\ref{qc}) can be used, for instance, to compute the ground state electronic energy of atoms or molecules. In the last years, very successful SDP methods based on the $N$-representability problem have been independently introduced in quantum chemistry to compute these electronic energies \cite{mazziotti,mazziotti2}. Our hierarchy of SDP relaxations actually reduces to these existing SDP techniques. But our approach is more general, and can be used to compute the ground-state energy of other many-body systems, such as spin systems or systems described by \emph{unbounded} operators satisfying the canonical relations $[x,p]=i$ (in which case it has to be slightly adapted). These applications will be presented in a forthcoming paper.

Finally, the method presented here might also prove useful for problems where the Hilbert space dimension is fixed in advance.
Consider for instance a polynomial optimization problem of the form (\ref{polyncprog2}) where \linebreak[1] $\text{dim } H=r$, i.e, where the operators $X$ are $r\times r$ matrices. We may in principle solve such a
problem by introducing an explicit parametrization of the matrices $X$ and by using Lasserre's method for polynomial scalar optimization \cite{lasserre} or its extension taking into account polynomial
matrix inequalities \cite{henrion}. This would necessitate, however, to introduce of the order of $r^2$ scalar variables for each operator $X_i$. This renders this approach impractical even for small problems.
In comparison, the method presented here treats each matrix as a single variable. Although it only represents a relaxation of the original problem since the Hilbert space dimension is not fixed (in particular we have no guarantee that the sequence of relaxations will converge to a solution with $\text{dim }H=r$), it may nevertheless provide a
cheap way to compute lower-bound on the optimal solutions of these problems when it is too costly to introduce an explicit parametrization.

\section{Acknowledgements}
We are grateful to Jean Bernard Lasserre and Mihai Putinar for helpful discussions. We thank Ben Toner for pointing out to us reference~\cite{heltonmc} and anonymous referees for their constructive comments.
S.P acknowledges support by the Swiss NCCR Quantum Photonics and the EU Integrated Project QAP. M.N. acknowledges support from an Institute for Mathematical Sciences Fellowship. We thank the European QAP and PERCENT projects, the Spanish MEC
FIS2007-60182 and Consolider-Ingenio QOIT projects, and the Generalitat de
Catalunya and Caixa Manresa for financial support.

\appendix
\section*{Appendix A: Basics of semidefinite programming}\label{appsdp}
\addcontentsline{toc}{section}{Appendix A: Basics of semidefinite programming}

Semidefinite programming \cite{boyd-vande} is a subfield of convex
optimization concerned with the following optimization problem,
known as the \emph{primal problem}
\begin{eqnarray}
\mbox{minimize}\quad && c^Tx\nonumber\\
\mbox{subject to}\quad &&F(x)=\sum_{i=1}^m x_iF_i-G\succeq 0\,.
\label{primal}
\end{eqnarray}
The problem variable is the vector $x$ with $m$ components $x_i$ and the problem
parameters are the $n\times n$ matrices $G,F_i$ and the scalars $c_i$.  A vector $x$ is said to be
\emph{primal feasible} when $F(x)\geq 0$.

For each primal problem there is an associated \emph{dual
problem}, which is a maximization problem of the form
\begin{eqnarray}
\mbox{maximize}\quad && \mbox{tr}(GZ)\nonumber\\
\mbox{subject to}\quad &&\mbox{tr}F_iZ=c_i \quad i=1,...,m\label{dualb}\\
&& Z\succeq 0\nonumber
\end{eqnarray}
where the optimization variable is the $n\times n$ matrix $Z$.
The dual problem is also a semidefinite program, i.e., it can be
put in the same form as (\ref{dualb}).
A matrix $Z$ is said to be \emph{dual feasible} if it
satisfies the conditions in (\ref{dualb}).

The key property of the dual program is that it yields bounds on the
optimal value of the primal program. To see this, take a primal
feasible point $x$ and a dual feasible point $Z$. Then $c^T
x-\mbox{tr}(GZ) = \sum_{i=1}^m\mbox{tr}(ZF_i)x_i - \mbox{tr}(GZ) =
\mbox{tr}(ZF(x))\geq 0$. This proves that the optimal primal value
$p^*$ and the optimal dual value $d^*$ satisfy $d^*\leq p^*$. In
fact, it usually happens that $d^*=p^*$. A sufficient condition for
this to hold is that the dual (primal) problem admits a strict feasible point, that is, that there exists a matrix
$Z\succ 0$ ($F(x)\succ 0$)) that is dual (primal) feasible \cite{boyd-vande}. We refer the reader to the review of Vandenberghe and Boyd \cite{boyd-vande} for further information on SDP.

There exist many available numerical packages to solve SDPs, for
instance for Matlab, the toolboxes SeDuMi \cite{sedumi} and YALMIP
\cite{yalmip}. These algorithms solve both the primal and
the dual at the same time and thus yields bounds on the accuracy
of the solution that is obtained.

\section*{Appendix B: Duals of the SDP relaxations}
\addcontentsline{toc}{section}{Appendix B: Dual of the SDP relaxations}

Here we show that the duals of the relaxations $\mathbf{R_k}$ defined in (\ref{relax}) correspond to the problems (\ref{dual2}). To simplify the presentation, we do
this explicitly only in the case where we are dealing with
polynomials defined in the real free $*-$algebra $\mathbb{R}[x,x^*]$ and where the SDP relaxations (\ref{relax}) only
involves real quantities. The more general case of complex SDP
relaxations can be treated similarly by decomposing them in real and
imaginary parts.

Write
$M_k(y)=\sum_w
 B_ w  y_ w $ and
$M_{k-d_i}(q_iy)=\sum_ w  C^i_{ w } y_ w $ for appropriate
symmetric matrices $B_ w $ and $C^i_ w $.
The SDP relaxation (\ref{relax}) is
then expressed as an SDP problem in primal form (\ref{primal}) and its dual
is
\begin{equation}\label{dual}
\begin{array}{cccl}
\lambda^{k}&=&\displaystyle\max_{\lambda,V,W_i} &\multicolumn{1}{l}{\lambda}\\
&&\mathrm{s.t.}& p_1=\lambda+\text{tr}\left(B_\emptyset
V\right)+\sum_{i=1}^{m}\text{tr}(C^i_\emptyset
W_i)\\
&&& p_ w =\text{tr}\left(B_ w
V\right)+\sum_{i=1}^{m}\text{tr}(C^i_ w
W_i)
\qquad \qquad(\forall\, 0<| w |\leq 2k)\\
&&& V\succeq 0,\\
&&& W_i\succeq 0, \quad i=1,\ldots,m\,, \end{array}
\end{equation}
where $\lambda\in\mathbb{R}$,
$V\in\mathbb{R}^{|\mathcal{W}_k|}\times\mathbb{R}^{|\mathcal{W}_k|}$, and
$W_i\in\mathbb{R}^{|\mathcal{W}_{k-d_i}|}\times\mathbb{R}^{|\mathcal{W}_{k-d_i}|}$.

The terms on the left hand-sides of the above equality constraints
are the coefficients in the canonical basis of monomials
$\mathcal{W}_{2k}=\{w \,:\,| w |\leq 2k\}$ of the polynomial
$p$. The quantities $\text{tr}(B_ w  V)$ on the right-hand side are the
coefficients of a polynomial of the form $\sum_j b^*_jb_j$, where each $b_j$ is a polynomial of degree $k$.
Indeed, it is easily seen from the definition of the moment matrix
$M_k(y)$ that the entries of the matrices $B_ w $ satisfy
$B_ w ( u,v )=1$ if $ w = u ^*v $ or $ w = v ^*u $ and
$B_w
( u,v )=0$ otherwise. It follows that
$\sum_{| w |\leq 2k} \text{tr}(B_ w  V)
w =\sum_{| u |,| v |\leq k} V_{ v  u }\,u^*
v $, where we used that $V$ is symmetric. As $V$ is positive semidefinite, we can write $V=\sum_j
\mu_j a_j a_j^T$, where $\mu_j\geq 0$ are the eigenvalues of
$V$ and $a_j$ the corresponding eigenvectors. Using this expression
for $V$, we obtain that $\sum_{|w|\leq 2k} \text{tr}(B_ w
V) w  = \sum_j \mu_j a^*_ja_j$, which is of the
announced form with $b_j=\sqrt{\mu_i}a_j$. In a similar way,
it can be shown that $\sum_{| w |\leq 2k} \text{Tr}(C^i_w
W_i) w  = \sum_j c^*_{ij}q_ic_{ij}$.  Putting all together, we find that
the the problem (\ref{dual}) is equivalent to
\begin{equation}\label{dual3}
\begin{array}{cccll}
\lambda^{k}&=&\displaystyle\max_{\lambda,b_i,c_{ij}} &\multicolumn{1}{l}{\lambda}\\
&&\mathrm{s.t.}&p-\lambda= \sum_j
b^*_jb_j+\sum_{i=1}^{m}\sum_j
c^*_{ij}q_ic_{ij}\\
&&& \max_{j}\text{deg}(b_j)\leq k,\\
&&&\max_{j}\text{deg}(c_{ij})\leq k-d_i\,.
\end{array}
\end{equation}
In the case of polynomials defined on $\mathbb{C}[x,x^*]$, the dual of (\ref{relax}) has the same form as above, but now all polynomials are allowed to take complex coefficients.

A similar analysis can be carried to show that the problems (\ref{relaxb}) and (\ref{dual2gen}) are dual to each other.

\addcontentsline{toc}{section}{References}
\bibliographystyle{unsrt}
\bibliography{poi_relaxation}

\end{document}